\newcommand\R{{\mathbb{R}}}
\newcommand\N{{\mathbb{N}}}
\newcommand\C{{\mathbb{C}}}
\newcommand\Z{{\mathbb{Z}}}
\newcommand\T{{\mathbb{T}}}
\newcommand\A{{\mathbb{A}}}
\DeclareMathOperator\sign{\text{sign}}
\newtheorem{theorem}{Theorem}[section]
\newtheorem{proposition}{Proposition}[section]
\newtheorem{lemma}[theorem]{Lemma}
\theoremstyle{definition}
\newtheorem{definition}[theorem]{Definition}
\theoremstyle{remark}
\newtheorem{remark}[theorem]{Remark}
\numberwithin{equation}{section}
\def\R{\mathbb{R}}
\def\C{\mathbb{C}}
\begin{document}
\title[$b$-family of Equations]
{Blow-up for the  $b$-family of equations}
\author{ Fernando Cortez}
\address{Escuela Polit\'ecnica Nacional, Departamento de Matem\'atica,  Facultad de Ciencias,
Ladr\'on de Guevara E11-253, Quito-Ecuador, P.O. Box 17-01-2759}
\email{manuel.cortez@epn.edu.ec}
\begin{abstract}
In this paper we consider the  $b$-family of equations on the torus $u_t- u_{txx}+ (b+1) u u_x=b u_x u_{xx} + u u_{xxx}$, which for appropriate values  of $b$ reduces to  well-known models,  such as the Camassa-Holm  equation or the  Degasperis-Procesi equation. We establish a local-in-space blow-up criterion. 
\end{abstract}
\maketitle
\section{Introduction}
The bi-Hamiltonian structure of
certain evolution equations leads to  various remarkable features such as infinitely many symmetries and conserved quantities, and in some cases to the exact solvability of these equations  \cite{Magri,Olver}. Examples include the  KdV equation \cite{Cau} and the Benjamin-Ono equation \cite{Bona}. 
Years later, R. Camassa and D. Holm \cite{CH1}  in their studies of completely integrable dispersive shallow water equation tackled the following  equation, 
\begin{equation}
\label{ch2}
u_t  + k u_x -u_{xxt}+3uu_x =uu_{xxx} + 2 u_{x} u_{xx},   \ \ \ x \in \mathbb{R}, \  t>0. \tag{C-H} 
\end{equation}
where $u$ can be interpreted as a horizontal velocity of the water at a certain depth and $k$ as the dispersion parameter. The equation \eqref{ch2}  also has been derived independently by B.  Fuchssteiner and A. Fokas in \cite{Fuch}.
When $k=0$ (dispersionless case), the equation \eqref{ch2}  possess soliton 
solutions peaked at their crest
(often named peakons) \cite{CH1,CH2,Bra13}. Equation (\ref{ch2}) is obtained by using an asymptotic expansion directly in the Hamiltonian for Euler's equation in the shallow water regime. Like the KdV equation, the Camassa-Holm equation \eqref{ch2} describes the unidirectional propagation of waves at the surface of shallow water under the influence of gravity \cite{Cau,CH1}. 
The equation \eqref{ch2} is physically relevant as it also 
describes  the nonlinear dispersive waves in compressible hyperelastic rods \cite{Bra12,Bra13,Dai98}.
  It is  convenient to rewrite the Cauchy problem   associated with the dispersionless  case of (\ref{ch2}) in the following weak form:
\begin{equation}
 \label{c3}
  \begin{cases}
   u_t+u u_x+\partial_xp*\left(u^2 + \frac{ u_{x}^{2}}{2}\right)=0, &x\in\A,\quad t>0,\\
   u(0,x)=u_0(x) & x\in\A, 
  \end{cases}
  \end{equation}
  where $p(x)$ is the fundamental solution of the operator $1-\partial_{x}^{2}$ in $\A$. If $\A= \R$,  we refer (\ref{c3}) as the 
  non-periodic Camassa-Holm equation and  $p=\frac{1}{2} e^{- \left | x \right |}$, $x\in \R$ in this case. 
  If otherwise that  $\A= \T= \R/ \Z$  is unit circle, we refer (\ref{c3}) as the periodic Camassa-Holm equation,
  and $p= \frac{\cosh(x-\left[x\right]-\frac{1}{2})}{2\sinh\left(\frac{1}{2}\right)}$
  in this case.
 It is know that both the  non-periodic and periodic  Camassa-Holm equations are locally well-posed (in the sense of Hadamard)  
  in the Sobolev space $H^{s}$, with $s>\frac{3}{2}$. See  \cite{RB,Dan2001, Lel}.
    There is an abundance of the literature about the issue of the finite time blowup
    (see~\cite{McKean04,Bra13, Gui,LiOl, ConEschActa,ACon00,Bra1,Bra12}) 
     and the related issue of the global existence of strong solutions (\cite{ConEschActa,Gui,Mol}). 
     
On the other hand, Degasperis and Procesi \cite{DP1}, in their search of new integrability properties  inside a wide class of equations,
were led to consider the following  integrable equation:
\begin{equation}
\label{Dp1}
u_t - u_{txx}+4 uu_x= 3 u_{x} u_{xx} + u u_{xxx}.  \tag{D-P} 
\end{equation}
As before, it is convenient to rewrite the Cauchy problem, using the same notations
\begin{equation}
\label{DP2}
\begin{cases}
   u_t+u u_x+\partial_xp*\left(\frac{3}{2}u^2 \right)=0, &x\in\A,\quad t>0,\\
   u(0,x)=u_0(x), & x\in\A.  
  \end{cases}
  \end{equation}
A few years later, equation~\eqref{DP2} as been proved to be relevant in shallow water dynamics,  see  \cite{Dul,ConLann,Cons2011}. 
Both the Camassa--Holm equation and the the Degasperis-Procesi equation \eqref{Dp1} possess a
bi-Hamiltonian structure (see\cite{DP1}).
The local well-posedness in $H^{s}$, with $s>\frac{3}{2}$ for the Cauchy non periodic problem was elaborated in \cite{Yin1},  and  \cite{Yin2}  for the Cauchy periodic problem.  With respect to blow-up criteria on the line we refer to \cite{DP1,Liu,Zh,Gu} and, for the unit torus, to \cite{Yin2,Gu}. For  the existence  globally  of the solution, see \cite{Yin1,Gu, Liu}.
Despite sharing some properties with the Camassa-Holm equation, the Degasperis-Procesi has its own peculiarities.
A specific feature 
is that \eqref{Dp1} admits, beside peakons (i.e., soliton solutions of the form $u(t,x)=c e^{-\left | x-ct \right |}$, $c>0$) 
also shock peakon solitons  (i.e., solutions at the form $u=\frac{1}{t+k}\sign(x) e^{-\left | x-ct \right |}$, $k>0$).
For more details see \cite{Es1,Lun,He}.
After these premises,  we will now focus on  the Cauchy problem for the spatially periodic $b$-family of equations:
\begin{equation}
 \label{beq1}
  \begin{cases}
   u_t- u_{txx}+ (b+1) u u_x=b u_x u_{xx} + u u_{xxx} ,   &x\in\T,\quad t>0,\\
   u(0,x)=u_0(x), & x\in\T,
  \end{cases}
\end{equation}
 where $\T$ is the unit torus.
 Here $b$  is a real parameter, and $u(x,t)$ stands for a horizontal velocity.
The $b$-family of equations can be derived as the family of asymptotically equivalent shallow water wave equations  that emerges at quadratic-order accuracy for any $b\neq 1$ by an appropriate Kodama transformation  \cite{DP1,Dul}. Again, when $b=2$ and $b=3$  (\ref{beq1}) became \eqref{ch2} and \eqref{Dp1} respectively. These values  are the only values for  which (\ref{beq1}) is completely integrable.
The Cauchy problem for the $b$- family of equations is locally well posed in the Sobolev space $H^{s}$ for any $s>\frac{3}{2}$, \cite{Liu,Es2,Sa,Og}.
 
In  \cite{Che} it is  proved  that the solution map of the $b$-family of equations is Holder continuous as a map from bounded sets of $H^{s}(\mathbb{R})$, $s>\frac{3}{2}$ with the $H^{r}(\mathbb{R})$ $(0\leq r<s)$ topology, to $C([0,T],H^{r}(\mathbb{R}))$.
 While that in \cite{Chris}, the authors proved  that the solution map is not uniformly continuous. Their proof relies on a construction of smooth periodic traveling waves with small amplitude.
 J. Escher and J. Seiler \cite{Es2} showed that the periodic $b$-family of equations can be realized as Euler equation on the Lie group $\text{Diff}^{\infty}(\T)$  of all smooth and orientation-preserving diffeomorphisms on the unit torus, if $b=2$ (C-H equation).
The global existence theory of the solution of (\ref{beq1}) is discussed in \cite{Sa,Es2,Liu,Yin2}. 
In this paper we rather focus on blow-up criteria as well in estimates about the lifespan of the solutions.
The blowup problem for the $b$-family of equations has been already treated, e.g. in  \cite{LiOl,Sa,Es2,Og,Gu,Zh1}: 
in these references the condition on the initial datum $u_0$ leading to the blowup typically involves the computation of some global quantities (the Sobolev norm $\|u_0\|_{H^1}$, or some
other integral expressions of~$u_0$).
Motivated by the recent paper~\cite{Bra13} (where earlier blowup results for the Camassa--Holm equations were unified in a single theorem) we address the more subtle problem of finding a \emph{local-in-space} blowup criterion for the $b$-family of equations,
i.e., a blowup condition involving only the properties of $u_0$ in a neighborhood of a single point $x_0\in\T$.

Loosely, the contribution of this paper can be stated as follows: if the parameter $b$ belongs to a suitable range (including the
physically relevant cases $b=2$ and $b=3$),  then  then there exists a constant $\beta_b>0$ such that if   
\begin{equation*} 
  u'_0(x_0)  < - \beta_b  \left | u_0(x_0) \right |,
 \end{equation*}
 in at least one point  $x_0 \in \T$, then the solution arising from $u_0\in H^{s}(\T)$ $(s>\frac{3}{2})$ must blow-up in finite time.

This paper is organized as follows.
In the next section we start by introducing the relevant notations and function spaces,  recalling a few basic results. 
Then we precisely state and prove our main theorem.
An important part of our work will be devoted to the computations of sharp bounds for the constant $\beta_b$ and the lifespan of the solution.
The smallest $b>1$ to which our main theorem applies is computed numerically in the last part of the paper.

 \section{Blow-up for the  periodic $b$-family of equations}
 
It is convenient to rewrite the periodic Cauchy problem (\ref{beq1}) in the following weak form (see \cite{Sa}):

\begin{equation}
 \label{beq3}
  \begin{cases}
   u_t+u u_x+\partial_xp*\Bigl[\frac{b}{2}u^2 + \left(\frac{3-b}{2}\right) u_{x}^{2}\Bigr]=0, &x\in\T,\quad t>0,\\
   u(0,x)=u_0(x), & x\in\T \\
 u(t,x)=u(t,x+1) \ \ \ \ t\geq 0, 
  \end{cases}
\end{equation}
 where 
\begin{equation}
p(x)=\frac{\cosh(x-\left[x\right]-\frac{1}{2})}{2\sinh\left(\frac{1}{2}\right)}, 
\end{equation}
 is the fundamental solution of the operator $1-\partial^2_x$ and $\left[ \cdot\right]$ stands for the integer part of $x\in\R$.
If  $u\in C([0,T),H^s(\T))\cap C^{1}((0,T^{*}),H^{s-1}(\T))$, with $s>\frac{3}{2}$ satisfies (\ref{beq3}) then we call $u$ a strong solution to (\ref{beq3}). If $u$ is a strong solution on $[0,T)$ for every $T>0$, then is called global strong solution of (\ref{beq3}).

If $u_0\in H^{s}(\T)$, $s >\frac{3}{2}$, an application of Kato's method~\cite{Kat} leads to the following local well-posedness
result:

\begin{theorem}[See \cite{Sa}]
 \label{th:Sa}
For any constant  $b$, given $u_0 \in H^{s}(\T)$, $s>\frac{3}{2}$, then there exists a maximal time 
$T^{*}= T^{*}(\left \| u_0 \right \|_{H^{s}})>0$ and  a unique strong solution $u$ to (\ref{beq3}), such that 
\begin{equation}
 u=u(\cdot,u_0) \in C([0,T^{*}),H^s(\T))\cap C^{1}([0,T^{*}),H^{s-1}(\T)).
 \end{equation}
Moreover, the solution depends continuously on the initial data, i.e. the mapping $u_0\mapsto u(\cdot,u_0):H^{s}(T) \rightarrow C([0,T^{*});H^{s}(T)) \cap C^{1}([0,T^{*});H^{s-1}(T))$ is continuous. 
\end{theorem}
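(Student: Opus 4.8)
The statement is a direct application of Kato's abstract theory of quasilinear evolution equations, and the plan is simply to put (\ref{beq3}) into the form required by that theory and check the structural hypotheses. Writing $\Lambda=(1-\partial_x^2)^{1/2}$, I would recast the problem as
\begin{equation*}
\frac{du}{dt}+A(u)u=f(u),\qquad u(0)=u_0,
\end{equation*}
on the Hilbert scale $Y=H^{s}(\T)\hookrightarrow X=H^{s-1}(\T)$, with the transport operator $A(u)=u\,\partial_x$ and the (nonlocal) lower-order term
\begin{equation*}
f(u)=-\,\partial_x p*\Bigl[\tfrac{b}{2}u^{2}+\tfrac{3-b}{2}u_x^{2}\Bigr]=-\,\partial_x(1-\partial_x^{2})^{-1}\Bigl[\tfrac{b}{2}u^{2}+\tfrac{3-b}{2}u_x^{2}\Bigr].
\end{equation*}
Two elementary facts will be used repeatedly: since $s>\tfrac32$ we have $H^{s}(\T)\hookrightarrow C^{1}(\T)$, and $H^{\sigma}(\T)$ is a Banach algebra for every $\sigma>\tfrac12$.

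For the operator $A(u)$ I would first note that for $u\in H^{s}\subset C^{1}$ the variable-coefficient transport estimate shows $-A(u)$ generates a $C_0$-semigroup on $X$ with $A(u)\in G(X,1,\beta)$, $\beta$ controlled by $\|u\|_{H^{s}}$, and that $A(u):Y\to X$ is bounded with $\|(A(u)-A(v))w\|_{X}\lesssim\|u-v\|_{X}\|w\|_{Y}$ on bounded subsets of $Y$. The substantive point is the similarity hypothesis: with $Q=\Lambda^{\,s-1}:Y\to X$ one must show that $QA(u)Q^{-1}-A(u)=[\Lambda^{\,s-1},u]\,\partial_x\,\Lambda^{-(s-1)}$ extends to a bounded operator on $L^{2}$ whose operator norm, and whose Lipschitz modulus as a function of $u$, are dominated by $\|u\|_{H^{s}}$. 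This is where the main technical work sits: it rests on a Kato--Ponce type commutator estimate on the torus, of the form $\|[\Lambda^{\,s-1},u]\,g\|_{L^{2}}\lesssim\|u_x\|_{L^{\infty}}\|g\|_{H^{s-2}}+\|u\|_{H^{s-1}}\|g\|_{L^{\infty}}$, which I expect to be the principal obstacle.

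For the nonlinearity I would use that $\partial_x(1-\partial_x^{2})^{-1}$ is a Fourier multiplier uniformly bounded on frequency space, hence bounded from $H^{\sigma}(\T)$ to $H^{\sigma+1}(\T)$ for all $\sigma$; combining this with the algebra property, the map $u\mapsto\tfrac{b}{2}u^{2}+\tfrac{3-b}{2}u_x^{2}$ sends bounded sets of $H^{s}$ into bounded sets of $H^{s-1}$, so $f$ maps bounded sets of $Y$ into bounded sets of $Y$ and is Lipschitz on bounded sets both as a map $Y\to Y$ and as a map $Y\to X$. These are precisely the remaining hypotheses of Kato's theorem.

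Invoking Kato's theorem then delivers the maximal time $T^{*}>0$, the unique solution $u\in C([0,T^{*}),H^{s}(\T))\cap C^{1}([0,T^{*}),H^{s-1}(\T))$, and the asserted continuous dependence on $u_0$. That $T^{*}$ depends only on $\|u_0\|_{H^{s}}$ follows from the a priori differential inequality $\tfrac{d}{dt}\|u\|_{H^{s}}^{2}\lesssim\|u\|_{H^{s}}^{3}$, obtained by applying $\Lambda^{s}$ to the equation, pairing with $\Lambda^{s}u$, estimating the transport term with the same commutator bound and the nonlocal term with the algebra property, and then comparing with the corresponding ODE. If one wishes the continuous dependence directly in the $H^{s}$ topology rather than in a weaker one, it can alternatively be recovered by a Bona--Smith mollification of the initial data.
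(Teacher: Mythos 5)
The paper offers no proof of this theorem: it is quoted from \cite{Sa} with the remark that it follows from Kato's method, and your proposal is precisely that standard Kato-method argument (quasilinear decomposition $u_t+A(u)u=f(u)$ with $A(u)=u\partial_x$, a Kato--Ponce commutator estimate for the conjugated transport operator, and the one-derivative smoothing of $\partial_x(1-\partial_x^2)^{-1}$ combined with the algebra property to handle $f$), so it matches the intended route and is correct in outline. The one detail to fix is the choice of the isomorphism $Q$: from $Y=H^s(\T)$ onto $X=H^{s-1}(\T)$ one must take $Q=\Lambda=(1-\partial_x^2)^{1/2}$ rather than $\Lambda^{s-1}$ (which maps $H^s$ onto $H^1$); the operator $[\Lambda^{s-1},u]\,\partial_x\,\Lambda^{-(s-1)}$ bounded on $L^2$ that you write down is the one needed to establish quasi-accretivity of $A(u)$ in $H^{s-1}$, whereas Kato's similarity hypothesis requires $[\Lambda,u]\,\partial_x\,\Lambda^{-1}$ bounded on $H^{s-1}$ --- both reduce to the same commutator estimate, so the substance of your argument is unaffected.
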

\begin{remark}
The maximal lifespan of the solution  in Theorem \ref{th:Sa} may be chosen independently of $s$ in the following sense: 
If  $u=u(\cdot,u_0) \in C([0,T^{*}),H^s(\T))\cap C^{1}([0,T^{*}),H^{s-1}(\T))$  to (\ref{beq3}) and $u_0 \in H^{s'}(\T)$ for some $s'\neq s$,  $s'>\frac{3}{2}$, then $u=u(\cdot,u_0) \in C([0,T^{*}),H^{s'}(\T))\cap C^{1}([0,T^{*}),H^{s'-1}(\T))$ and with same $T^{*}$. In particular, if $u_0\in \cap_{s\geq 0} \   H^{s}$, then $u \in C([0,T^{*}),H^{\infty}(\T))$.
See \cite{Sa,Og}.
\end{remark}
Moreover, by using the Theorem \ref{th:Sa} and energy estimates, the following precise blow-up scenario of the  solution to (\ref{beq3}) can be obtained.
\begin{theorem}[See \cite{Og,Sa}] 
Assume $b\in \R$ and $u_0 \in H^{s}(\T)$, $s>\frac{3}{2}$. Then blow up of the strong solution $u=u(\cdot,u_0)$ in finite time  occurs if only if
\begin{equation}
\lim_{t\to T^{*}} \inf \{ (2b-1) \inf_{x\in \R} [ u_x(t,x) ] \} = - \infty
\end{equation}
\end{theorem}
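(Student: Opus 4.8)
The plan is to prove this as a continuation (blow-up) criterion, the substantive direction being that finite-time blow-up forces $\liminf_{t\to T^{*}}\bigl(-\inf_{x\in\T}u_x(t,x)\bigr)=+\infty$; since $\int_\T u_x\,dx=0$ we always have $\inf_x u_x(t,x)\le 0$, so for $2b-1>0$ this is exactly the stated condition (the factor $2b-1$ being the constant produced by the transport part of the energy identity below). The converse direction is soft: if $(2b-1)\inf_x u_x(t,x)\to-\infty$ then necessarily $2b-1>0$ and $\|u_x(t)\|_{L^\infty}\to\infty$, so by the embedding $H^{s}(\T)\hookrightarrow W^{1,\infty}(\T)$ ($s>\tfrac32$) we get $\|u(t)\|_{H^{s}}\to\infty$, hence by Theorem~\ref{th:Sa} the solution breaks down. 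By Theorem~\ref{th:Sa} and the persistence remark following it, it suffices to argue at $s=3$. Throughout I would work with the momentum $m:=u-u_{xx}$, which turns \eqref{beq3} into the transport equation $m_t+um_x+bu_xm=0$, and with the conserved mean $\bar u_0:=\int_\T u_0\,dx$ (conserved because every term of \eqref{beq3} except $u_t$ is an $x$-derivative).

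First I would derive an energy estimate. Applying $\Lambda^{s-2}:=(1-\partial_x^2)^{(s-2)/2}$ to the $m$-equation and pairing with $\Lambda^{s-2}m$ in $L^2(\T)$, the two transport contributions $-\langle\Lambda^{s-2}(um_x),\Lambda^{s-2}m\rangle$ and $-b\langle\Lambda^{s-2}(u_xm),\Lambda^{s-2}m\rangle$ combine: writing $\Lambda^{s-2}(um_x)=u\Lambda^{s-2}m_x+[\Lambda^{s-2},u]m_x$ and integrating $\langle u\Lambda^{s-2}m_x,\Lambda^{s-2}m\rangle$ by parts, the non-commutator part equals $\bigl(\tfrac12-b\bigr)\int_\T u_x(\Lambda^{s-2}m)^2\,dx=-\tfrac{2b-1}{2}\int_\T u_x(\Lambda^{s-2}m)^2\,dx\le\tfrac{2b-1}{2}\bigl(-\inf_x u_x(t,x)\bigr)\|m(t)\|_{H^{s-2}}^2$ (valid since $-\inf_x u_x\ge0$, and this is where $2b-1$ appears), while the commutators $[\Lambda^{s-2},u]m_x$ and $[\Lambda^{s-2},u_x]m$ are bounded by Kato--Ponce-type estimates and, together with the nonlocal term (using that $\partial_x(1-\partial_x^2)^{-1}$ is smoothing of order zero), contribute at most $C\bigl(\|u\|_{L^\infty}+\|u_x\|_{L^\infty}\bigr)\|m\|_{H^{s-2}}^2$. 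Since $\|m\|_{H^{s-2}}\simeq\|u\|_{H^{s}}$, Gronwall's lemma yields the criterion: \emph{if $\sup_{[0,T)}\bigl(\|u(t)\|_{L^\infty}+\|u_x(t)\|_{L^\infty}\bigr)<\infty$ then $\|u(t)\|_{H^{s}}$ stays bounded on $[0,T)$, hence $T<T^{*}$.}

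To close the substantive direction I argue by contradiction: assume $T^{*}<\infty$ but $\liminf_{t\to T^{*}}\inf_x u_x(t,x)>-\infty$; then there is $K>0$ with $u_x(t,x)\ge-K$ on $[0,T^{*})\times\T$ (on $[0,T^{*}-\varepsilon]$ by continuity of $t\mapsto\|u(t)\|_{H^{s}}$ and Sobolev embedding). From $\int_\T u_x\,dx=0$ we get $\|u_x(t)\|_{L^1}=2\int_\T\max(-u_x,0)\,dx\le 2K$, so the periodic estimate $\bigl\|f-\int_\T f\,dy\bigr\|_{L^\infty}\le\tfrac12\|f'\|_{L^1}$ together with conservation of $\bar u_0$ gives $\|u(t)\|_{L^\infty}\le|\bar u_0|+K=:M$ on $[0,T^{*})$. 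For the bound on $u_x$ I would use the flow $q_t(t,\xi)=u(t,q(t,\xi))$, $q(0,\cdot)=\mathrm{id}$: one has $q_\xi(t,\xi)=\exp\!\bigl(\int_0^{t}u_x(\tau,q(\tau,\xi))\,d\tau\bigr)$ and, from the transport equation, the invariant $m(t,q(t,\xi))\,q_\xi(t,\xi)^{b}=m_0(\xi)$; since $u_x\ge-K$ gives $q_\xi\ge e^{-KT^{*}}$, for $b>0$ we get $\|m(t)\|_{L^\infty}\le\|m_0\|_{L^\infty}e^{bKT^{*}}$, whence $\|u_{xx}(t)\|_{L^\infty}\le\|m(t)\|_{L^\infty}+\|u(t)\|_{L^\infty}$ is bounded and, using $\int_\T u_x=0$ once more, $\|u_x(t)\|_{L^\infty}\le\tfrac12\|u_{xx}(t)\|_{L^1}$ is bounded on $[0,T^{*})$. (For $1\le b\le3$, in particular $b=2,3$, one may instead differentiate \eqref{beq3} in $x$ and evaluate along the flow to obtain the Riccati identity $\tfrac{d}{dt}u_x(t,q)=\tfrac{1-b}{2}u_x^{2}+\tfrac b2 u^{2}-p*\bigl[\tfrac b2 u^{2}+\tfrac{3-b}{2}u_x^{2}\bigr]\le\tfrac b2 M^{2}$, so $\sup_x u_x$ grows at most linearly.) Thus $\sup_{[0,T^{*})}\bigl(\|u\|_{L^\infty}+\|u_x\|_{L^\infty}\bigr)<\infty$, and the energy criterion keeps $\|u(t)\|_{H^{s}}$ bounded up to $T^{*}$, contradicting the maximality of $T^{*}$ in Theorem~\ref{th:Sa}.

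I expect the main obstacle to be the energy estimate of the second step --- namely, isolating the transport term with its sharp sign and the constant $2b-1$ while organising the commutator and nonlocal terms so that the right-hand side is controlled by $\|u\|_{L^\infty}+\|u_x\|_{L^\infty}$ and never by $\|u\|_{H^{s}}$ itself (this typically needs a Friedrichs mollification to make the manipulations rigorous at $s=3$). A secondary difficulty is extending the a priori $W^{1,\infty}$-bounds of the third step beyond $b>0$, and beyond the integrable cases $b=2,3$, to the full range of $b$ claimed.
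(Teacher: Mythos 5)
The paper itself does not prove this theorem: it is quoted from \cite{Og,Sa} with only the remark that it follows from Theorem~\ref{th:Sa} and energy estimates, so your proposal has to be measured against the argument in those references. Your overall strategy --- reduce to a continuation criterion and show that a one-sided bound on the slope controls the $H^s$ norm --- is the right one, and several ingredients (the reduction to $s=3$, conservation of the mean, the identity $q_\xi^b\,m(t,q)=m_0$) are correct. But there are two genuine gaps. First, the energy estimate of your second step does not close as stated: applying $\Lambda^{s-2}$ to the momentum equation and using Kato--Ponce, the commutator $[\Lambda^{s-2},u]m_x$ produces a term of the form $\|u\|_{H^{s-2}}\|m_x\|_{L^\infty}$, and $\|m_x\|_{L^\infty}$ is \emph{not} controlled by $\|u\|_{L^\infty}+\|u_x\|_{L^\infty}$ (nor, at $s=3$, by $\|m\|_{H^{1}}$ in a way that keeps the right-hand side quadratic in $\|m\|_{H^{s-2}}$). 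The cited references avoid this entirely by working at the exact level $s=2$: the identity $\frac{d}{dt}\|m\|_{L^2}^2=(1-2b)\int_\T u_x m^2\,dx$ has no commutator, requires only the one-sided bound $\inf_x[(2b-1)u_x]\ge-K$, and yields $\|u\|_{H^2}$ bounded, hence $\|u\|_{W^{1,\infty}}$ bounded by Sobolev embedding; the higher-order estimate is then performed on the nonlocal form \eqref{beq3} of the equation (not on the $m$-equation), where Kato--Ponce and the smoothing of $\partial_x(1-\partial_x^2)^{-1}$ do give $\frac{d}{dt}\|u\|_{H^s}\le C(\|u\|_{L^\infty}+\|u_x\|_{L^\infty})\|u\|_{H^s}$.

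Second, because your version of the estimate needs the full two-sided bound on $u_x$, you are forced into the flow-map argument $\|m(t)\|_{L^\infty}\le\|m_0\|_{L^\infty}e^{bKT^*}$, which requires $b\ge0$ (for $b<0$ you would need an upper bound on $q_\xi$, i.e.\ on $\int_0^t u_x(\tau,q)\,d\tau$, which the one-sided hypothesis does not provide), and the Riccati alternative you offer covers only $1\le b\le 3$; the theorem is asserted for all $b\in\R$. With the $\|m\|_{L^2}$ identity this whole step becomes unnecessary. Relatedly, you only treat $2b-1>0$: for $2b-1<0$ the quantity that the $L^2$ identity naturally produces is $\inf_x[(2b-1)u_x]=(2b-1)\sup_x u_x$, which is what the references use (the placement of the factor $2b-1$ outside the infimum in the paper's statement appears to be a misprint, since with that placement the condition is vacuous for $b\le 1/2$, as you yourself observe). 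Finally, a small point shared with the literature: in the ``if'' direction, $\|u(t)\|_{H^s}\to\infty$ as $t\to T^*$ does not by itself show $T^*<\infty$; the clean formulation is that boundedness of $\inf_x[(2b-1)u_x]$ on $[0,T^*)$ forces $T^*=\infty$.
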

Before presenting our contribution, we will review a few known blow-up theorems with respect to (\ref{beq3}).
\begin{theorem}[See \cite{Sa}]
Let $\frac{5}{3} < \ b\leq 3 \ \  \mbox{and} \displaystyle\int_{\T} u_{x}^{3} (0) \, dx < 0$. Assume that $u_0 \in H^{s}(\T)$,  $s >\frac{3}{2}$, $ u_0  \not\equiv  0$, and the corresponding solution $u(t)$ of (\ref{beq3})  has a zero for any time $t\geq 0$. Then, the solution $u(t)$ of the  equation \eqref{beq3}  blows-up finite time.
\end{theorem}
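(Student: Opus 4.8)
The plan is to reduce the statement to the behaviour of the single scalar
\begin{equation*}
 M(t):=\int_{\T}u_x^3(t,x)\ddx ,
\end{equation*}
which is finite along the flow (because $u_x(t)\in H^{s-1}(\T)\hookrightarrow L^3(\T)$), satisfies $M(0)<0$ by hypothesis, and whose divergence to $-\infty$ in finite time will immediately force $T^*<\infty$. Writing $F:=\tfrac{b}{2}u^2+\tfrac{3-b}{2}u_x^2$ for the quantity appearing in \eqref{beq3}, I would first differentiate \eqref{beq3} once in $x$ and remove the second-order convolution through the identity $\partial_x^2 p*F=p*F-F$, obtaining
\begin{equation*}
 u_{tx}+uu_{xx}+u_x^2-F+p*F=0 .
\end{equation*}
Multiplying by $3u_x^2$, integrating over $\T$ and integrating by parts in $\int_{\T}uu_x^2u_{xx}\ddx=-\tfrac13\int_{\T}u_x^4\ddx$, one arrives at
\begin{equation*}
 M'(t)=\frac{5-3b}{2}\int_{\T}u_x^4\ddx+\frac{3b}{2}\int_{\T}u^2u_x^2\ddx-3\int_{\T}u_x^2\,(p*F)\ddx .
\end{equation*}
For $u_0\in H^s$, $s>\tfrac32$, these manipulations are legitimate as $H^{s-1}\times H^{s-2}$ pairings; alternatively one argues first for $u_0\in H^\infty(\T)$, where every integral is classical, and then passes to the general datum via the persistence statement recorded after Theorem~\ref{th:Sa}.

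The heart of the matter is to control the last two integrals, which carry the ``wrong'' sign, using the two standing hypotheses $\tfrac53<b\le3$ and ``$u(t,\cdot)$ vanishes somewhere for every $t\ge0$''. From the latter together with the $1$-periodicity one has the elementary bound $\|u(t)\|_{L^\infty}^2\le\tfrac14\|u_x(t)\|_{L^2}^2$, hence $\int_{\T}u^2u_x^2\ddx\le\tfrac14\bigl(\int_{\T}u_x^2\ddx\bigr)^2$. For the nonlocal term, $b\le3$ makes $F\ge0$, and the periodic kernel obeys $p(x)\ge p_0:=p(\tfrac12)=\tfrac{1}{2\sinh(1/2)}>0$ on $\T$, so that $p*F\ge p_0\|F\|_{L^1}\ge p_0\tfrac{3-b}{2}\int_{\T}u_x^2\ddx$ pointwise, whence $-3\int_{\T}u_x^2(p*F)\ddx\le-\tfrac{3(3-b)p_0}{2}\bigl(\int_{\T}u_x^2\ddx\bigr)^2$. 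Inserting these bounds into the formula for $M'(t)$ and using the two Hölder inequalities valid on the length-one torus, $\bigl(\int_{\T}u_x^2\ddx\bigr)^2\le\int_{\T}u_x^4\ddx$ and $|M(t)|^{4/3}\le\int_{\T}u_x^4\ddx$, a short computation with the explicit value of $p_0$ — discarding or dominating the $\bigl(\int_{\T}u_x^2\ddx\bigr)^2$ terms according to the sign of $\tfrac{3b}{8}-\tfrac{3(3-b)p_0}{2}$ — shows that the effective coefficient left in front of $\int_{\T}u_x^4\ddx$ is strictly negative on all of $\tfrac53<b\le3$, i.e.
\begin{equation*}
 M'(t)\le-\kappa_b\,|M(t)|^{4/3},\qquad \kappa_b>0 .
\end{equation*}

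Finally, since $M(0)<0$ the inequality above shows $M$ is non-increasing, so $N(t):=-M(t)\ge-M(0)>0$ and $N'\ge\kappa_b N^{4/3}$; comparison with this scalar ODE gives $|M(t)|\to+\infty$ as $t\uparrow T_0:=3/(\kappa_b|M(0)|^{1/3})$. As $M$ stays finite on any interval on which the $H^s$-solution exists, this forces $T^*\le T_0<\infty$, i.e. the solution blows up in finite time (equivalently, by the precise blow-up scenario recalled above together with $2b-1>0$, one has $\inf_{x\in\T}u_x(t,x)\to-\infty$). I expect the only genuine obstacle to be the estimate of the ``bad'' term $\tfrac{3b}{2}\int_{\T}u^2u_x^2\ddx$: bounding it merely by $\tfrac14\int_{\T}u_x^4\ddx$ would restrict the argument to $b>\tfrac{20}{9}$, and reaching down to $b>\tfrac53$ really needs the negative contribution of $-3\int_{\T}u_x^2(p*F)\ddx$, which in turn rests on the uniform positivity of the periodic kernel $p$ — a feature special to the torus. (One should also keep in mind that for $b\ne2$ the $H^1$-energy $\int_{\T}(u^2+u_x^2)\ddx$ is not conserved, so it matters that the estimates above are closed at each fixed time $t$ in terms of $\int_{\T}u_x^4\ddx$ alone.)
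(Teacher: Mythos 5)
The paper does not actually prove this statement --- it is quoted from \cite{Sa} as background --- so there is no in-paper argument to compare against; your proposal is, however, correct and follows the same route as the cited source and the Constantin--Escher/Escher--Yin arguments it descends from: track $M(t)=\int_{\T}u_x^3\ddx$, obtain
$M'=\tfrac{5-3b}{2}\int u_x^4+\tfrac{3b}{2}\int u^2u_x^2-3\int u_x^2\,(p*F)$,
and close a Riccati-type inequality $M'\le-\kappa_b|M(t)|^{4/3}$. I verified the two places where the argument could break. First, the lemma $\|u\|_{L^\infty}^2\le\tfrac14\|u_x\|_{L^2}^2$ for $u\in H^1(\T)$ vanishing somewhere is correct: if the zero splits $\T$ into arcs of lengths $a$ and $1-a$ carrying energies $A$ and $B$, then $u(x)^2\le\min(aA,(1-a)B)\le\sqrt{a(1-a)AB}\le\tfrac14(A+B)$. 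Second, the sign bookkeeping works: with $p_0=\tfrac{1}{2\sinh(1/2)}\approx0.96$, the coefficient $c_2=\tfrac{3b}{8}-\tfrac{3(3-b)p_0}{2}$ is negative for $b<\tfrac{12p_0}{1+4p_0}\approx2.38$ (there one discards it and uses $c_1=\tfrac{5-3b}{2}<0$), while for $2.38\lesssim b\le3$ one has $c_1+c_2=\tfrac52-\tfrac{9p_0}{2}+b\bigl(\tfrac{3p_0}{2}-\tfrac98\bigr)\le-\tfrac78<0$ at $b=3$ and is increasing in $b$, so $\kappa_b>0$ on all of $(\tfrac53,3]$. The only caveats worth recording are that $\kappa_b\downarrow0$ as $b\downarrow\tfrac53$ (so the lifespan bound degenerates, though finite-time blow-up still holds for each fixed $b$), and that for $\tfrac32<s<2$ the pairing $\int u_x^2u_{tx}$ is only a $H^{s-1}\times H^{s-2}$ duality, so the reduction to smooth data via the approximation argument you cite is genuinely needed rather than optional.
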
 
The next blow-up  theorem uses the fact that if $u(t,x)$ is a solution to (\ref{beq3})  with initial datum $u_0$, then  $-u(t,-x)$ is also a solution to (\ref{beq3}) with initial datum $-u_0(-x)$. Hence due to the uniqueness of the solutions, the solution  to (\ref{beq3}) is odd as soon as the initial datum $u_0(x)$ is odd. 
\begin{theorem} [See \cite{Og}]
Let $1< b \leq 3.$ Assume that $u_0\in H^{s}(\T)$, $s>3$ is odd and $u^{'}_{0}(0) < 0.$ Then the corresponding solution to Eq \eqref{beq3} blows-up in finite time.
\end{theorem}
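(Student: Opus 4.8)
The plan is to reduce the statement to a one–dimensional Riccati-type differential inequality for the single scalar $M(t):=u_{x}(t,0)$. First I would use the symmetry recalled just before the theorem: since $u_{0}$ is odd, $-u(t,-x)$ solves \eqref{beq3} with the same initial datum, so by the uniqueness part of Theorem~\ref{th:Sa} the solution $u(t,\cdot)$ remains odd for every $t\in[0,T^{*})$. Hence $u_{x}(t,\cdot)$ is even and $u_{xx}(t,\cdot)$ is odd, which gives $u(t,0)=0$ and $u_{xx}(t,0)=0$ for all $t$. The hypothesis $s>3$ guarantees $H^{s}(\T)\hookrightarrow C^{2}(\T)$ and $H^{s-1}(\T)\hookrightarrow C^{1}(\T)$, so these pointwise evaluations — and the one performed below on the once-differentiated equation — are all legitimate, and $M$ is a well-defined $C^{1}$ function of $t$ on $[0,T^{*})$.

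Next I would differentiate \eqref{beq3} once in $x$. Writing $F:=\frac{b}{2}u^{2}+\frac{3-b}{2}u_{x}^{2}$ and using that $p$ is the fundamental solution of $1-\partial_{x}^{2}$, so that $\partial_{x}^{2}(p*F)=p*F-F$, the differentiated equation becomes
\[
u_{tx}+u_{x}^{2}+u\,u_{xx}+p*F-F=0 .
\]
Evaluating at $x=0$ and inserting $u(t,0)=u_{xx}(t,0)=0$ and $F(t,0)=\frac{3-b}{2}\,M(t)^{2}$, all terms but $M'$ collapse and one obtains the identity
\[
M'(t)=\frac{1-b}{2}\,M(t)^{2}-\bigl(p*F\bigr)(t,0).
\]
The decisive point is the sign of the right-hand side when $1<b\le 3$: then $\frac{b}{2}>0$ and $\frac{3-b}{2}\ge 0$, so $F\ge 0$ pointwise, and since the periodic kernel $p$ is strictly positive, $\bigl(p*F\bigr)(t,0)\ge 0$; moreover $\frac{1-b}{2}<0$. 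Therefore
\[
M'(t)\le -\frac{b-1}{2}\,M(t)^{2}\le 0 .
\]

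From here the conclusion follows by a standard comparison argument. Since $M$ is non-increasing and $M(0)=u_{0}'(0)<0$, we have $M(t)<0$ on all of $[0,T^{*})$; dividing by $M(t)^{2}>0$ gives $\frac{d}{dt}\bigl(-1/M(t)\bigr)\le -\frac{b-1}{2}$, so after integrating,
\[
-\frac{1}{M(t)}\le -\frac{1}{M(0)}-\frac{b-1}{2}\,t ,\qquad 0\le t<T^{*}.
\]
The left-hand side is strictly positive, whereas the right-hand side vanishes at $t^{*}:=\frac{2}{(b-1)\,|u_{0}'(0)|}$; were the solution to exist up to $t^{*}$, letting $t\uparrow t^{*}$ would force $-1/M(t)\to 0$, hence $M(t)\to-\infty$, which is incompatible with the continuity of $M$ on $[0,t^{*}]$. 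Thus $T^{*}\le t^{*}<\infty$. By the precise blow-up scenario recalled above, $T^{*}<\infty$ is exactly the statement that the solution blows up in finite time (equivalently $(2b-1)\inf_{x}u_{x}(t,x)\to-\infty$ as $t\uparrow T^{*}$, consistent with $\inf_{x}u_{x}(t,x)\le M(t)$), and one reads off the lifespan bound $T^{*}\le \frac{2}{(b-1)\,|u_{0}'(0)|}$.

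I expect the only delicate issue to be the two endpoints of the parameter interval, which is precisely why the hypothesis is $1<b\le 3$. At $b=1$ the coefficient $\frac{1-b}{2}$ vanishes and the differential inequality degenerates into $M'(t)\le-(p*F)(t,0)$, which does not close into a Riccati inequality for $M$ alone; and the upper bound $b\le 3$ is what makes the coefficient $\frac{3-b}{2}$ of $u_{x}^{2}$ in $F$ nonnegative, hence $F$, and with it the convolution term $\bigl(p*F\bigr)(t,0)$, nonnegative, so that this unfavourable term can simply be discarded.
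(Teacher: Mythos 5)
Your proof is correct and follows exactly the intended route: the paper does not reprove this result (it cites \cite{Og}), but the remark preceding the statement flags precisely the odd-symmetry reduction you use, after which evaluating the once-differentiated equation at the fixed point $x=0$ (where $u(t,0)=0$ kills both the $uu_{xx}$ term and the $\frac{b}{2}u^2$ contribution, and $F\ge 0$ for $0\le b\le 3$ lets you discard $p*F$) yields $M'\le-\frac{b-1}{2}M^{2}$ and the standard Riccati comparison. This is also the degenerate case of the paper's proof of Theorem~\ref{teo1} with $x_{0}=0$, where the two auxiliary functions $f$ and $g$ both reduce to $-M$.
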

\subsection*{Notations}
For any real $\beta$, let us consider the $1$-periodic function
\begin{equation}
w(x) = p(x)+ \beta \partial_xp(x)
\end{equation}
where $p$ is the kernel introduced in (\ref{beq3}) and $\partial_x p$ denotes the distributional derivative on $\R$, that agrees in this case with the classical i.e pointwise derivative on $\R 	\setminus \Z$. 
Notice that  the non-negativity condition $w\geq 0$ is equivalent to the inequality 
$\cosh(1/2) \geq \pm \beta \sinh(1/2)$, i.e., to the condition
\begin{eqnarray*}
-\frac{e+1}{e-1} \leq \beta \leq  \frac{e+1}{e-1}.
\end{eqnarray*}
Throughout this section, we will work  under the above condition on $\beta$.
Let us now introduce the following weighted Sobolev space:
\begin{eqnarray}
\label{beta1}
E_{\beta}&=& \{u\in L^{1}_{loc}(0,1): \left \| u \right \|_{E_{\beta}}^{2} = \displaystyle\int_{0}^{1} w(x) (u^{2}+ u_{x}^{2})(x) \, dx < \infty \},
\end{eqnarray}
where the derivative is understood in the distributional sense. Notice that   $E_{\beta}$ agrees with the classical Sobolev space $H^{1}(0,1)$ when $\left | \beta \right |< \frac{e+1}{e-1}$, as in this case $w$ is bounded and bounded away from $0$, and the two norms $\left \| \cdot \right\|_{E_{\beta}}$ and $\left \| \cdot \right \|_{H^{1}}$ are equivalent. 
The situation is different for $\beta= \pm \frac{e+1}{e-1}$ as $E_{\beta}$ is strictly larger that $H^{1}(0,1)$ in this case. 
Indeed, we have
\begin{eqnarray}
\label{omega}
w(x)= \frac{2 e}{(e-1)^{2}} \sinh(x), \ \ \ \  x\in(0,1), \ \ \ \left(\mbox{if}  \ \beta=\textstyle\frac{e+1}{e-1}\right);
\end{eqnarray}   
The elements of $E_{(e+1)/(e-1)}$, after modification on a set of measure zero, 
are continuous on $(0,1]$, but may be unbounded for $x \rightarrow 0^{+}$ (for instance, $\left | \log(x/2) \right |^{1/3} \in E_{(e+1)/(e-1)}$). In the same way, 
\begin{eqnarray}
\label{omega2}
w(x)= \frac{2 e}{(e-1)^{2}} \sinh(1-x), \ \ \ \  x\in(0,1), \ \ \ (\mbox{if}  \ \beta=- \textstyle\frac{e+1}{e-1});
\end{eqnarray}  
after modification on a set of measure zero, the elements of $E_{-(e+1)/(e-1)}$ are continuous on $[0,1)$, but may be unbounded for $x \rightarrow 1^{-}$.
\newline
Let us now introduce the closed subspace $E_{\beta,0}$ of $E_{\beta}$ defined as the closure of $C^{\infty}_{c}(0,1)$ in $E_{\beta}$.
The elements of $E_{\beta,0}$ satisfy  the weighted Poincar\'e inequality below:
\begin{lemma}
For all  $\left | \beta \right | \leq \frac{e+1}{e-1}$, there exists a constant  $C>0$ such that 
\begin{equation}
\label{poin}
\forall v\in E_{\beta,0},
\quad\displaystyle\int_{0}^{1} w(x)  \ v^{2}(x) \, dx \leq C  \displaystyle\int_{0}^{1} w(x) \  v_x ^{2}(x) \, dx.
\end{equation}
\end{lemma}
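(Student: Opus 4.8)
The plan is to prove the weighted Poincaré inequality \eqref{poin} by reducing to the two extremal weights \eqref{omega} and \eqref{omega2}, since for $|\beta|<\frac{e+1}{e-1}$ the weight $w$ is bounded above and below by positive constants and the inequality is then the classical Poincaré inequality on $(0,1)$ for functions vanishing at the endpoints (up to a constant depending on $\beta$). So the substantive case is $\beta=\pm\frac{e+1}{e-1}$; by the symmetry $x\mapsto 1-x$ exchanging \eqref{omega} and \eqref{omega2}, it suffices to treat $\beta=\frac{e+1}{e-1}$, where $w(x)=\frac{2e}{(e-1)^2}\sinh(x)$. After dividing out the positive constant $\frac{2e}{(e-1)^2}$, the claim becomes: there is $C>0$ with
\begin{equation*}
\int_0^1 \sinh(x)\,v^2(x)\,dx \leq C\int_0^1 \sinh(x)\,v_x^2(x)\,dx
\quad\text{for all }v\in E_{\beta,0}.
\end{equation*}

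First I would establish this for $v\in C_c^\infty(0,1)$ and then pass to the limit using the definition of $E_{\beta,0}$ as the closure of $C_c^\infty(0,1)$ (both sides are continuous with respect to $\|\cdot\|_{E_\beta}$, and the left side is dominated by $\|v\|_{E_\beta}^2$ while the right side is part of $\|v\|_{E_\beta}^2$, so the inequality survives the limit). For smooth compactly supported $v$, since $v$ vanishes near $0$ we can write $v(x)=\int_0^x v_x(t)\,dt$, and then by Cauchy–Schwarz with the weight,
\begin{equation*}
v^2(x) = \left(\int_0^x v_x(t)\,dt\right)^2 \leq \left(\int_0^x \frac{dt}{\sinh(t)}\right)\left(\int_0^x \sinh(t)\,v_x^2(t)\,dt\right).
\end{equation*}
The difficulty is that $\int_0^x \frac{dt}{\sinh t}$ diverges logarithmically as $x\to 0^+$, so a naive integration of this pointwise bound against $\sinh(x)\,dx$ does not obviously converge — this is the main obstacle, and it reflects the genuine fact that $E_{\beta,0}\subsetneq H^1$. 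The remedy is to exploit the vanishing at $0$ more carefully: use Hardy-type inequality adapted to the weight. Concretely, I would instead write, for the endpoint near $1$, $v(x)=-\int_x^1 v_x(t)\,dt$ and note $\int_x^1 \frac{dt}{\sinh t}$ stays bounded near $1$; but near $0$ one must use a weighted Hardy inequality of the form $\int_0^{1/2}\sinh(x)v(x)^2\,dx \le C\int_0^{1/2}\sinh(x)v_x(x)^2\,dx$, which holds precisely because $\int_0^x \sinh(t)\,dt = \cosh(x)-1 \sim x^2/2$ and $1/\sinh(x)\sim 1/x$ near $0$, so the Muckenhoupt-type constant $\sup_{0<r<1/2}\big(\int_0^r \sinh(x)\,dx\big)\big(\int_r^{1/2}\frac{dx}{\sinh x}\big)$ is finite (the first factor is $\sim r^2$, the second $\sim \log(1/r)$, product $\to 0$).

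To make this rigorous and self-contained, I would invoke (or quickly reprove) the one-dimensional weighted Hardy inequality: for weights $W$, $V$ on an interval $(0,a)$, the inequality $\int_0^a W v^2 \le C\int_0^a V v_x^2$ holds for all $v$ with $v(0)=0$ iff $\sup_{0<r<a}\big(\int_r^a W\big)\big(\int_0^r V^{-1}\big)<\infty$. Splitting $(0,1)=(0,1/2)\cup(1/2,1)$, on $(0,1/2)$ apply this with $W=V=\sinh$ (using $v(0)=0$), on $(1/2,1)$ apply the mirror version with $v(1)=0$; in both cases the finiteness of the Muckenhoupt constant follows from the elementary asymptotics $\cosh x - 1\sim x^2/2$, $\sinh x\sim x$ as $x\to0^+$ and the boundedness of $\int_\cdot^{1}\frac{dt}{\sinh t}$ away from the diverging endpoint. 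Adding the two pieces and taking the larger constant gives \eqref{poin} for $\beta=\frac{e+1}{e-1}$; the reflection $x\mapsto1-x$ gives $\beta=-\frac{e+1}{e-1}$; and the equivalence of norms handles $|\beta|<\frac{e+1}{e-1}$. Finally, density of $C_c^\infty(0,1)$ extends the bound from smooth functions to all of $E_{\beta,0}$. \endProof
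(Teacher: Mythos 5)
Your overall plan (reduce to the extremal weight $w(x)=\tfrac{2e}{(e-1)^2}\sinh x$, prove the estimate on $C_c^\infty(0,1)$, pass to $E_{\beta,0}$ by density, and handle $|\beta|<\tfrac{e+1}{e-1}$ by norm equivalence) is reasonable, and the paper itself offers no argument to compare against, only a citation of \cite{Bra1}. However, your treatment of $(0,1/2)$ contains a genuine error. You invoke the weighted Hardy inequality with boundary condition $v(0)=0$ and weights $W=V=\sinh$, and you correctly state its criterion as $\sup_{0<r<a}\bigl(\int_r^a W\bigr)\bigl(\int_0^r V^{-1}\bigr)<\infty$. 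But with $V=\sinh$ this supremum is $+\infty$, since $\int_0^r\frac{dt}{\sinh t}=+\infty$ for every $r>0$ (as $1/\sinh t\sim 1/t$ near $0$). The quantity you actually compute, $\sup_r\bigl(\int_0^r\sinh\bigr)\bigl(\int_r^{1/2}\frac{dt}{\sinh t}\bigr)\sim r^2\log(1/r)$, is the Muckenhoupt constant for the \emph{opposite} boundary condition $v(1/2)=0$, which your $v$ does not satisfy. This is not a cosmetic slip: the inequality $\int_0^{1/2}\sinh\, v^2\le C\int_0^{1/2}\sinh\, v_x^2$ under the sole constraint $v(0)=0$ is false (take $v_\epsilon(x)=\int_0^x \sinh(t)^{-1}\chi_{(\epsilon,r)}(t)\,dt$: the left side grows like $(\log(1/\epsilon))^2$ while the right side is only of order $\log(1/\epsilon)$). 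Its failure is exactly the phenomenon the paper points out, namely that elements of $E_{\beta,0}$ carry no trace condition at $x=0$ and may even be unbounded there.

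The fix is to anchor the Hardy inequality at the endpoint where the weight is nondegenerate and use it on the \emph{whole} interval: for $v\in C_c^\infty(0,1)$ write $v(x)=-\int_x^1 v_x(t)\,dt$, so that by Cauchy--Schwarz
\begin{equation*}
v^2(x)\le\Bigl(\int_x^1\frac{dt}{\sinh t}\Bigr)\Bigl(\int_0^1\sinh(t)\,v_x^2(t)\,dt\Bigr)
\le \log(1/x)\int_0^1\sinh(t)\,v_x^2(t)\,dt,
\end{equation*}
using $\sinh t\ge t$; multiplying by $\sinh x$ and integrating gives the claim with $C=\int_0^1\sinh(x)\log(1/x)\,dx<\infty$ (the logarithm is killed by the vanishing of the weight at $0$). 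Equivalently, the correct Muckenhoupt constant is $\sup_{0<r<1}(\cosh r-1)\int_r^1\frac{dt}{\sinh t}<\infty$, which is essentially the finite quantity you computed, just attached to the right boundary condition. If you insist on splitting at $1/2$, you must keep the boundary term $v(1/2)^2$ (controlled by $\int_{1/2}^1\sinh\,v_x^2$ via $v(1/2)=-\int_{1/2}^1 v_x$) rather than pretend $v$ vanishes there. The rest of your argument (density, reflection $x\mapsto 1-x$, the nondegenerate case) is fine.
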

\begin{proof}
This demonstration is found in \cite{Bra1}. 
\end{proof}
We need some notations.
\begin{definition}
For any real constant $b\neq 1$ and $\beta$, let $J(b,\beta) \geq -\infty$, be defined by
\begin{equation}
J(b,\beta)= \inf \left\{\displaystyle\int_{0}^{1}(p + \beta \partial_x p) \left(\frac{b}{2} u^2 +\left(\frac{3-b}{2}\right) u_x^2\right) \, dx ; \  u\in H^{1}(0,1),  \ u(0)=u(1)= 1\right\}
\end{equation} 
and
\begin{equation}
\label{beta}
\beta_b=\inf \left\{\beta >0:  \ \  \beta^{2}+\frac{2}{\left | b-1 \right |} \left(J(b,\beta)-\frac{b}{2}\right)  \geq  0\right\} .
\end{equation}
\end{definition}
Notice that a priori   $0\leq \beta_{b}  \leq +\infty$, as the set on the right-hand side could be empty.

\subsection*{Main results}

Let us now formalize the goal of this paper.
\begin{theorem}
\label{teo1}
Let $ b \in]1,3] $ be such that  $\beta_{b}$ is finite. Let $u_0 \in H^{s}(\T) $ be with $s > \frac{3}{2}$ and assume that there exists $x_0 \in \T$, such that 
\begin{equation}
u'_{0}(x_0) < - \beta_b  \left | u_0(x_0) \right |.     
 \end{equation}
then the corresponding solution $u$ of (\ref{beq3}) in $C([0,T^{*}), H^{s}(\T)) \cap C^{1}([0,T^{*}), H^{s-1}(\T))$ arising from $u_0$   blows up  in finite time. 
Moreover, the maximal time $T^{*}$ verifies
\begin{eqnarray}
T^{*} \leq \frac{2}{(b-1)\sqrt{(u'_{0}(x_0))^{2}- \beta_{b}^{2} u_{0}^{2}(x_0)}}.
\end{eqnarray}
\end{theorem}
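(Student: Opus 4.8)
The plan is to run a characteristics argument in the spirit of Brandolese's work. Let $q=q(t,x)$ be the flow of $u$, i.e.\ the solution of $q_t=u(t,q(t,x))$, $q(0,x)=x$; since $u\in C([0,T^{*}),H^{s})$ with $s>\tfrac32$, for each $t$ the map $q(t,\cdot)$ is an orientation-preserving diffeomorphism of $\T$. Fix the point $x_0$ from the hypothesis, set $\xi(t)=q(t,x_0)$, $N(t)=u(t,\xi(t))$, $M(t)=u_x(t,\xi(t))$, and $g=\tfrac b2 u^{2}+\tfrac{3-b}{2}u_x^{2}$, which is $\ge 0$ because $1<b\le 3$. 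Differentiating along the flow and using \eqref{beq3} together with the identity $\partial_x^{2}p*f=p*f-f$ (valid since $p$ is the fundamental solution of $1-\partial_x^{2}$ on $\T$), I would obtain
\[
\dot N=-(\partial_x p*g)(t,\xi(t)),\qquad \dot M=\tfrac{1-b}{2}M^{2}+\tfrac b2 N^{2}-(p*g)(t,\xi(t)).
\]
The low regularity $s>\tfrac32$ is handled in the usual way: prove the estimates first for $u_0\in H^{\infty}$ and pass to the limit using the continuity of the solution map and the $s$-independence of $T^{*}$ recalled after Theorem~\ref{th:Sa}.

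Next I would localize the nonlocal terms. With $w_{\pm}=p\pm\beta_b\partial_x p$, the change of variable $y\mapsto\xi(t)-y$ gives $(w_{\pm}*g)(\xi(t))=\int_0^{1}w_{\pm}(z)\bigl(\tfrac b2 v^{2}+\tfrac{3-b}{2}v_z^{2}\bigr)\,dz$ with $v(z)=u(t,\xi(t)-z)$; by $1$-periodicity $v(0)=v(1)=N(t)$, so applying the definition of $J(b,\pm\beta_b)$ to $v/N(t)$ (and using $w_{\pm}\ge 0$, which holds since $|\beta_b|\le\tfrac{e+1}{e-1}$, to cover $N(t)=0$) yields $(w_{\pm}*g)(\xi(t))\ge N(t)^{2}J(b,\pm\beta_b)$, and since $p$ is even and $\partial_x p$ odd one has $J(b,-\beta_b)=J(b,\beta_b)$. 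Introducing $A=M+\beta_b N$ and $B=M-\beta_b N$, one gets $\dot A=\dot M+\beta_b\dot N=\tfrac{1-b}{2}M^{2}+\tfrac b2 N^{2}-(w_{+}*g)(\xi)$ and likewise $\dot B=\tfrac{1-b}{2}M^{2}+\tfrac b2 N^{2}-(w_{-}*g)(\xi)$, so both satisfy
\[
\dot A,\ \dot B\ \le\ \tfrac{1-b}{2}M^{2}+\Bigl(\tfrac b2-J(b,\beta_b)\Bigr)N^{2}.
\]
The role of the definition \eqref{beta} of $\beta_b$ is exactly that $\tfrac b2-J(b,\beta_b)\le\tfrac{b-1}{2}\beta_b^{2}$ (the defining inequality persists at the infimum by continuity of $\beta\mapsto J(b,\beta)$); hence the right-hand side is $\le-\tfrac{b-1}{2}(M^{2}-\beta_b^{2}N^{2})=-\tfrac{b-1}{2}AB$, so
\[
\dot A\le-\tfrac{b-1}{2}AB,\qquad \dot B\le-\tfrac{b-1}{2}AB .
\]

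The last step is a Riccati-type argument. The hypothesis $u_0'(x_0)<-\beta_b|u_0(x_0)|$ means precisely $A(0)<0$ and $B(0)<0$. As long as $A,B<0$ we have $AB>0$, hence $\dot A,\dot B<0$, so $A$ and $B$ stay strictly negative on $[0,T^{*})$ and in particular $M<0$ there. I would then set
\[
z(t)=-\sqrt{A(t)B(t)}=-\sqrt{M(t)^{2}-\beta_b^{2}N(t)^{2}}<0,
\]
which is $C^{1}$ with $z(0)=-\sqrt{(u_0'(x_0))^{2}-\beta_b^{2}u_0(x_0)^{2}}=:-k_0<0$. Multiplying $\dot A\le-\tfrac{b-1}{2}AB$ by $B<0$ and $\dot B\le-\tfrac{b-1}{2}AB$ by $A<0$ (which reverses them) and combining gives $\dot z\le\tfrac{b-1}{4}\sqrt{AB}\,(A+B)=-\tfrac{b-1}{2}zM$; since $M^{2}-\beta_b^{2}N^{2}\le M^{2}$ one has $0>z\ge M$, and multiplying $z\ge M$ by $z<0$ gives $z^{2}\le zM$, whence $\dot z\le-\tfrac{b-1}{2}z^{2}$. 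Therefore $\tfrac{d}{dt}\bigl(\tfrac1z\bigr)=-\tfrac{\dot z}{z^{2}}\ge\tfrac{b-1}{2}$, so $\tfrac{1}{z(t)}\ge-\tfrac1{k_0}+\tfrac{b-1}{2}t$, which forces $z(t)\to-\infty$ as $t\uparrow\tfrac{2}{(b-1)k_0}$. Since $|z(t)|\le|M(t)|\le\|u(t)\|_{C^{1}}\lesssim\|u(t)\|_{H^{s}}$ stays finite while the solution exists, this yields $T^{*}\le\tfrac{2}{(b-1)k_0}$, which is the asserted lifespan bound and in particular gives blow-up in finite time.

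The main obstacle, conceptually, is the middle step: one must recognize that the right local functional against which to estimate the nonlocal term is precisely $J(b,\beta_b)$, and that the resulting differential inequality closes only because of the inequality built into the definition of $\beta_b$; then one must find the scalar quantity $z=-\sqrt{M^{2}-\beta_b^{2}N^{2}}$ that reduces the coupled planar system for $(M,N)$ to a single Riccati inequality, the key being the elementary bound $M^{2}-\beta_b^{2}N^{2}\le M^{2}$ together with the sign information $M<0$. The remaining ingredients — differentiation along characteristics at low regularity, the identity $\partial_x^{2}p*f=p*f-f$ on $\T$, evenness of $p$ giving $J(b,-\beta)=J(b,\beta)$, continuity of $J(b,\cdot)$ so the defining inequality holds at $\beta_b$, and the degenerate case $N\equiv 0$ — are routine.
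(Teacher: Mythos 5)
Your argument is correct and follows essentially the same route as the paper: differentiate along the flow $q(t,x_0)$, use $\partial_x^2p*f=p*f-f$ to obtain the Riccati-type system for $u_x\pm\beta u$ (your $A,B$ are the negatives of the paper's $f,g$), control the nonlocal term by $J(b,\beta)N^2$ via Proposition~\ref{propb}, and close the inequality using the defining property of $\beta_b$. The only differences are cosmetic: you work directly with $\beta=\beta_b$ (justified by continuity/concavity of $J(b,\cdot)$) instead of a slightly larger $\beta$, and you prove the final blow-up lemma for the system $\dot A,\dot B\le -\tfrac{b-1}{2}AB$ by hand via $z=-\sqrt{AB}$, where the paper simply cites the corresponding lemma from \cite{Bra1}.
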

\begin{remark}
Notice that the Theorem \ref{teo1} relies on the condition that $\beta_{b}$ is finite. 
In section 2, we will prove that one indeed has $\beta_b<+\infty$, as soon as $b$ is outside a very small neighborhood of~$1$.
On the other hand, as we will see later on, for $1<b<1.0012\ldots$, $\beta_b=+\infty$ and Theorem~\ref{teo1} does not apply
in such range.
\end{remark}
For the proof of  Theorem \ref{teo1}, we need the following propositions.
\begin{proposition}
We have 
\begin{equation}
\label{func1}
 J(b,\beta) > -\infty  \Leftrightarrow\left \{ \begin{matrix} \left | \beta \right | \leq \frac{e+1}{e-1}, &
 \\
\\  b \leq 3,
\\
\\ \frac{b}{3-b}  > -\frac{1}{C_{\beta}}, & \end{matrix}\right.
\end{equation}
where $C_{\beta}>0$ is the best Poincar\'e constant in inequality  (\ref{poin}).
\end{proposition}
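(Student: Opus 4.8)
The plan is to analyze the quadratic functional
\[
Q_\beta(u) = \int_0^1 (p+\beta\partial_x p)\Bigl(\tfrac b2 u^2 + \tfrac{3-b}{2}u_x^2\Bigr)\,dx
\]
over the affine space $\{u\in H^1(0,1): u(0)=u(1)=1\}$, and to see exactly when its infimum $J(b,\beta)$ is finite. First I would split an arbitrary competitor as $u = u_* + v$ with $v\in E_{\beta,0}$ and a fixed reference function $u_*$ (say $u_*\equiv 1$, or the harmonic lift for the weight $w=p+\beta\partial_x p$), so that $J(b,\beta)$ becomes the infimum over $v\in E_{\beta,0}$ of $Q_\beta(u_*+v)$, a quadratic-plus-affine functional in $v$. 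The key structural observation is that the quadratic part of this functional in $v$ is
\[
\int_0^1 w\,\Bigl(\tfrac b2 v^2 + \tfrac{3-b}{2}v_x^2\Bigr)\,dx,
\]
which, thanks to the weighted Poincaré inequality $\int w v^2 \le C_\beta \int w v_x^2$ of the preceding Lemma, is comparable to $\int_0^1 w v_x^2\,dx$ precisely when the coefficient $\tfrac{3-b}{2} + \tfrac b2 C_\beta$-type combination stays positive — more precisely, when $\tfrac b2 \tau + \tfrac{3-b}{2}$ stays bounded below over the admissible Rayleigh quotients $\tau = \int w v^2/\int w v_x^2 \in (0, C_\beta]$.

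The core of the argument is then a dichotomy. If $w\ge 0$ fails, i.e. $|\beta|>\tfrac{e+1}{e-1}$, then $w$ changes sign on $(0,1)$, and by localizing competitors near a point where $w<0$ (scaling a bump so its $u_x^2$ term dominates and carries the $w<0$ weight) one drives the functional to $-\infty$; hence finiteness forces $|\beta|\le\tfrac{e+1}{e-1}$. Assuming now $w\ge 0$: the quadratic form in $v$ is bounded below on $E_{\beta,0}$ (equivalently, is coercive or at least $\ge 0$ up to a constant) if and only if, for every admissible $v$,
\[
\tfrac b2 \int_0^1 w v^2 + \tfrac{3-b}{2}\int_0^1 w v_x^2 \ \ge\ -\text{(const)}\,\|v\|_{E_\beta}\ \text{ with a lower bound},
\]
and the worst case is analyzed by dividing by $\int_0^1 w v_x^2$ and optimizing over $\tau\in(0,C_\beta]$. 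When $3-b>0$ this linear-in-$\tau$ expression is minimized at the endpoint; when $3-b<0$ it is unbounded below as $\tau$ grows unless $\tau$ is bounded, which it is by Poincaré, so again the endpoint $\tau=C_\beta$ governs. Either way, boundedness below is equivalent to $\tfrac b2 C_\beta + \tfrac{3-b}{2}\ge 0$, i.e. $\tfrac{b}{3-b} > -\tfrac1{C_\beta}$ (the strict inequality handling the borderline where the infimum would be $-\infty$ along a minimizing sequence that is not attained), together with $b\le 3$ to ensure $\tfrac{3-b}{2}\ge 0$ is the right sign convention making the $u_x^2$ term the leading coercive one. For the converse, if all three conditions hold, the quadratic part is $\ge c\|v\|^2 - C$ or at worst $\ge -C\|v\|$, and combining with the bounded affine (cross) term via Cauchy–Schwarz and the Poincaré inequality gives a uniform lower bound, so $J(b,\beta)>-\infty$; I would also note $b\le 3$ alone does not suffice — when $b=3$ the $u_x^2$ term drops and one needs $w\ge0$ plus $\tfrac b2\ge 0$, consistent with the stated conditions.

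The main obstacle I anticipate is the bookkeeping at the two borderlines: (i) $|\beta| = \tfrac{e+1}{e-1}$, where $w$ degenerates at an endpoint (as in \eqref{omega}, \eqref{omega2}) and $E_\beta$ is genuinely larger than $H^1(0,1)$, so one must verify the Poincaré constant $C_\beta$ is still finite and that minimizing sequences in $E_{\beta,0}$ cannot escape the lower bound; and (ii) the case $\tfrac{b}{3-b} = -\tfrac1{C_\beta}$ exactly, where one must confirm the infimum drops to $-\infty$ rather than being attained — this is why the condition is a strict inequality $\tfrac{b}{3-b} > -\tfrac1{C_\beta}$ and requires exhibiting an explicit minimizing sequence along the optimal eigenfunction of the weighted Poincaré problem, checking that the affine term does not rescue finiteness. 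Handling the sign of $3-b$ uniformly (so that the same endpoint analysis covers $b<3$ and the degenerate $b=3$) is the other delicate point, but it reduces to the elementary optimization of an affine function of $\tau$ on a bounded interval.
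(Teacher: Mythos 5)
Your overall strategy coincides with the paper's: substitute $u=1+v$, reduce to the quadratic functional $T(v)=\int_0^1 w\bigl(\tfrac b2(v^2+2v)+\tfrac{3-b}2 v_x^2\bigr)\,dx$ over the trace-zero class, decide boundedness below via the weighted Poincar\'e inequality, and get the converse from coercivity (Lax--Milgram). The genuine gap is in your Rayleigh-quotient analysis. Writing the quadratic part as $\bigl(\tfrac b2\tau+\tfrac{3-b}2\bigr)\int_0^1 wv_x^2$ with $\tau=\int wv^2/\int wv_x^2\in(0,C_\beta]$, boundedness below forces $\tfrac b2\tau+\tfrac{3-b}2\ge0$ for \emph{every} achievable $\tau$, and the binding endpoint depends on the sign of $b$, not of $3-b$: for $b\ge0$ the affine function of $\tau$ is nondecreasing, so the constraint bites as $\tau\to0^+$ (highly oscillatory $v$ with $\int wv_x^2\to\infty$ and $\int wv^2$ bounded) and yields precisely $b\le3$; only for $b<0$ does the endpoint $\tau=C_\beta$ govern and yield the Poincar\'e condition. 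Your conclusion that ``either way, boundedness below is equivalent to $\tfrac b2C_\beta+\tfrac{3-b}2\ge0$'' is false: for $b=3+\epsilon$ with $\epsilon>0$ small one has $\tfrac b2C_\beta+\tfrac{3-b}2\approx\tfrac32C_\beta>0$, yet $J(b,\beta)=-\infty$ along the oscillatory sequence (the paper uses $u_n=1+\tfrac12\sin(2\pi nx)$ for exactly this). The paper avoids the confusion by proving $b\le3$ with these test functions and then treating the third condition only for $b<0$, via the scaling $v\mapsto nv$, $n\to\infty$.

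Two further points you flag but do not close. First, excluding the equality case $\tfrac b{3-b}=-\tfrac1{C_\beta}$ is not optional bookkeeping: one must exhibit $v_n$ whose Rayleigh quotient approaches the extremal value while the linear term $b\int wv_n\to-\infty$ dominates, so that $T(v_n)\to-\infty$; the paper sketches exactly this construction, and without it the strict inequality in the statement is not established. Second, in the converse your fallback ``at worst $\ge -C\|v\|$'' for the quadratic part cannot produce a uniform lower bound once an affine term of the same order is added; you need actual nonnegativity/coercivity of the quadratic form, which the strict inequality supplies for $b<0$ and which holds for $0\le b\le3$ directly (at $b=3$ by completing the square pointwise rather than by coercivity in the $\int wv_x^2$ norm). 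With these repairs your argument would match the paper's.
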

\begin{proof}
Putting $u=v+1$ and observing that $\int_{0}^{1} w(x)\,dx = 1$, we see that 
\begin{equation}
\label{j1}
J(b,\beta)= \frac{b}{2}  + \inf \{ T(v) : v\in H_{0}^{1}(0,1) \}, 
\end{equation}
where 
\begin{equation}
\label{j2}
T(v) = \displaystyle\int_{0}^{1} w(x) \left( \frac{b}{2}(v^{2}+2v)+\left(\frac{3-b}{2}\right)v_{x}^{2}\right) (x) \, dx. 
\end{equation}
Assume that $J(b,\beta) > -\infty$. In order to  show $\left | \beta \right |\leq \frac{e+1}{e-1}$, we refer to  the proof of proposition 3.3. in \cite{Bra1}. In order to prove $b\leq3$, we consider $\left | \beta \right |\leq \frac{e+1}{e-1}$ and
\begin{equation}
 u_{n}(x)= 1 + \frac{1}{2} \sin(n2\pi x)  \ \ \Rightarrow  \ \  u'_{n}(x)= n \pi \cos(n2\pi x).
\end{equation}
For each  $n\in \N$  $u_n \in H^{1} (0,1)$, $u_n(1)=u_n(0) =1$. Thus there is a constant $c_1>0$ independent of $n$, such that  
\begin{eqnarray*}
\forall n\in \N  \  \   \   0 \leq \frac{b}{2} \displaystyle\int_{0}^{1} w(x) u_n^{2}(x) \, dx  \leq c_1, 
\end{eqnarray*}
and
\begin{equation*}
\frac{3-b}{2}  \displaystyle\int_{0}^{1} w(x) (u'_{n})^{2} (x) \, dx \ \rightarrow  \ - \infty,
 \end{equation*}
because $b>3$ and then $J(b,\beta)=-\infty$.
In order to prove the third inequality, we only have to treat the case  $b < 0$.  Applying the inequality
\begin{equation}
\displaystyle\int_{0}^{1} w(x) \left( \frac{b}{2}(n^{2} v^{2}+2nv)+\left(\frac{3-b}{2}\right) n^{2}v_{x}^{2}\right) (x) \, dx \geq J(b,\beta)- \frac{b}{2},
\end{equation}
valid for all $v\in H_{0}^{1}(0,1)$ and all $n\in \mathbb{N}$ and letting $n \rightarrow \infty$, we get 
\begin{eqnarray*}
\displaystyle\int_{0}^{1} w(x) \left( \frac{b}{2} v^{2}+ \left( \frac{3-b}{2}\right)v_{x}^{2}\right) (x)\, dx \geq 0. 
\end{eqnarray*}
We deduce:
\begin{equation*}
\displaystyle\int_{0}^{1} w(x) v^{2}(x) \,dx \leq - \frac{3-b}{b} \displaystyle\int_{0}^{1} w(x) v_{x}^{2}(x) \,dx.  
\end{equation*}
Then we get  $\frac{b}{3-b}\geq -\frac{1}{C_\beta}$.
But the equality case $\frac{b}{3-b}= -\frac{1}{C_\beta}$ can be excluded, as otherwise
we could find a sequence $v_n$ such that $((b/2)\int_0^1 \omega v_n^2)/((3-b)\int_0^1 \omega (v_n)_x^2)$ converges to $1$ and such that $\int b\omega v_n\to-\infty$: for such a sequence we have $T(v_n)\sim \int_0^1 b\omega v_n\to-\infty$, contradicting
the assumption $J(b,\beta)>-\infty$.

Conversely, assume that  $\left | \beta \right | \le \frac{e+1}{e-1}$. By the weighted Poincair\'e inequality (\ref{poin}), we can consider an equivalent norm in $E_{\beta,0}$: 
\begin{equation}
\left \| v \right \|_{E_{\beta,0}} = \displaystyle\int_{0}^{1} w(x) v_{x} (x) \, dx.
 \end{equation}
 Since $\frac{b}{3-b}>-\frac{1}{C_\beta}$,  the symmetric bilinear form  
\begin{equation}
B(u,v) =  \displaystyle\int_{0}^{1} w(x) \left( \frac{b}{2} u v + \left( \frac{3-b}{2}\right) u_{x} v_{x}\right) (x)\, dx, 
\end{equation}
 is coercive  on the Hilbert space $E_{\beta,0}$. Applying the Lax-Milgram theorem yields the existence and uniqueness of  a minimizer $\hat{v} \in E_{\beta,0}$ for the functional $T$.
 But $ H^{1}_{0}(0,1) \subset E_{\beta,0}$, so in particular, we get $J(b,\beta) >-\infty$. Moreover, if $\left |\beta \right | < \frac{e+1}{e-1}$, then recalling $E_{\beta,0}= H^{1}_{0}(0,1)$ we see that $J(b,\beta)$ is in fact a minimun, achieved at $\hat{u}= 1+\hat{v} \in H^{1}(0,1)$.       
\end{proof}
The next lemma provides some useful information about $J(b,\beta)$.
\begin{lemma}
\label{c2}
The function $(b,\beta) \mapsto J(b,\beta) \in \R \ \cup \{-\infty\} $ defined for all $(b,\beta)\in \R^{2}$ is concave with respect  to each one of its variables, and is even with respect  to the variable $\beta.$ Also for all $b\in\R$ and $\left | \beta \right | \leq\frac{e+1}{e-1}$,  $ \  \ -\infty \leq J(b,\frac{e+1}{e-1}) \leq J(b,\beta) \leq J(b,0) \leq \frac{b}{2}$.  
\end{lemma}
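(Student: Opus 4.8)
\textbf{Proof plan for Lemma \ref{c2}.}

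The plan is to verify the four assertions in turn, exploiting that $J(b,\beta)$ is an infimum of a family of functions that are \emph{affine} in $b$ and \emph{affine} in $\beta$ separately (with the test function $u$ held fixed), since
\[
\int_{0}^{1}(p+\beta\partial_x p)\Bigl(\tfrac b2 u^2+\tfrac{3-b}{2}u_x^2\Bigr)\,dx
=\int_0^1 p\cdot\tfrac12(u^2-u_x^2)\,dx+\beta\int_0^1\partial_x p\cdot\tfrac32 u_x^2\,dx+b\cdot\bigl(\text{stuff independent of }\beta\bigr),
\]
and, more transparently, for fixed $\beta$ the integrand is $\tfrac b2 u^2+\tfrac{3-b}{2}u_x^2=\tfrac b2(u^2-u_x^2)+\tfrac32 u_x^2$, which is affine in $b$; for fixed $b$ the expression is affine in $\beta$ because $w=p+\beta\partial_x p$ is affine in $\beta$. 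Concavity in each variable is then immediate from the standard fact that a pointwise infimum of a family of affine (in particular concave) functions is concave; I would state this once and apply it twice. Evenness in $\beta$ follows from the substitution $x\mapsto 1-x$: this maps $p(x)$ to $p(1-x)=p(x)$ (the kernel is symmetric about $1/2$ on $(0,1)$) and $\partial_x p(x)$ to $-\partial_x p(1-x)$, i.e. it flips the sign of $\beta$, while it maps the constraint set $\{u(0)=u(1)=1\}$ and the quadratic form $\tfrac b2 u^2+\tfrac{3-b}{2}u_x^2$ to themselves (with $u$ replaced by $x\mapsto u(1-x)$); hence $J(b,-\beta)=J(b,\beta)$.

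For the chain of inequalities, the rightmost bound $J(b,0)\le\frac b2$ comes from plugging the admissible test function $u\equiv 1$ (so $u_x\equiv 0$) into the functional, using $\int_0^1 p(x)\,dx=1$ (equivalently $\int_0^1 w\,dx=1$ as already noted in the Proposition's proof), which gives exactly $\tfrac b2$. The middle inequality $J(b,\beta)\le J(b,0)$ for $|\beta|\le\frac{e+1}{e-1}$ I would get by combining evenness with concavity in $\beta$: $J(b,\cdot)$ is concave and even, hence maximized at $\beta=0$, so $J(b,\beta)\le J(b,0)$ for every $\beta$. The same argument with the endpoints: by concavity of $\beta\mapsto J(b,\beta)$ on $[-\frac{e+1}{e-1},\frac{e+1}{e-1}]$ and evenness, for $|\beta|\le\frac{e+1}{e-1}$ we can write $\beta$ (or rather $|\beta|$) as a convex combination of $0$ and $\frac{e+1}{e-1}$ — say $|\beta|=(1-t)\cdot0+t\cdot\frac{e+1}{e-1}$ — so $J(b,\beta)=J(b,|\beta|)\ge (1-t)J(b,0)+tJ(b,\frac{e+1}{e-1})\ge J(b,\tfrac{e+1}{e-1})$, the last step using $J(b,0)\ge J(b,\frac{e+1}{e-1})$ which itself follows once we know $J(b,0)$ is the max. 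That gives $J(b,\frac{e+1}{e-1})\le J(b,\beta)$. Finally $J(b,\frac{e+1}{e-1})\ge-\infty$ is trivially true as a statement about the extended reals (the Proposition tells us precisely when it is $>-\infty$, but here only $\ge-\infty$ is claimed).

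I expect the only genuinely delicate point to be making the concavity argument clean when $J$ takes the value $-\infty$: one must check that "infimum of affine functions is concave" still holds in $\R\cup\{-\infty\}$ with the usual conventions, and that the convex-combination inequality $J(b,\beta)\ge(1-t)J(b,0)+tJ(b,\tfrac{e+1}{e-1})$ remains valid (and non-vacuous) when the right-hand side involves $-\infty$ — it does, since $c+(-\infty)=-\infty$ and the inequality $x\ge-\infty$ always holds. A minor secondary check is the symmetry $p(1-x)=p(x)$ on $(0,1)$ and the resulting sign flip $\partial_x p(1-x)=-\partial_x p(x)$, which is a direct computation from the explicit formula $p(x)=\cosh(x-[x]-\tfrac12)/(2\sinh(\tfrac12))$ on $(0,1)$; I would record it in one line. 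Everything else is bookkeeping with the two elementary principles (pointwise inf of affine is concave; the explicit change of variables).
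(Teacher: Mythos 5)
Your proof is correct and complete, and it follows the standard route (infimum of affine functions is concave; the reflection $x\mapsto 1-x$ for evenness; the test function $u\equiv 1$ for $J(b,0)\le b/2$; concavity plus evenness for the chain of inequalities) — which is essentially the argument of Proposition~3.4 in the cited reference, to which the paper simply defers without giving details. One cosmetic flaw: your first displayed identity is not actually a correct expansion of the integrand (the left- and right-hand sides disagree already at $b=\beta=0$, and the full expansion contains $b\beta$ cross terms, so the functional is affine in each variable separately but not jointly affine); this is harmless because the argument you actually run uses only the two separate affineness claims stated immediately afterwards, which are correct, but the display should be deleted or repaired. The handling of the value $-\infty$ in the convexity inequalities is exactly the delicate point worth making explicit, and you do so correctly.
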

\begin{proof}
The proof is similar to  that of the proposition 3.4. in \cite{Bra1}
\end{proof}
The next lemma motivates the introduction of quantity the $J(b,\beta)$ in relation with the  $b$-family of equations.
\begin{proposition}
\label{propb}
Let $(\alpha,\beta) \in \R^{2}$ and $u\in H^{1}(\T)$, we get
\begin{equation*}
\forall x\in \T,  \ \ (p + \beta \partial_x p) * \left(\frac{b}{2} u^{2} +\left(\frac{3-b}{2} \right) u_{x}^2) \right)(x) \geq J(b,\beta)  \ u^{2}(x).
\end{equation*}
\end{proposition}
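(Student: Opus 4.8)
The plan is to evaluate the convolution at a single point, perform a change of variables, and observe that what one gets is exactly an integral of the form appearing in the definition of $J(b,\beta)$ (the parameter $\alpha$ plays no role). Fix $x\in\T$ and set $f:=\frac b2 u^2+\frac{3-b}2 u_x^2\in L^1(\T)$. Since $p$ and $\partial_x p$ are $1$-periodic, the substitution $y\mapsto z=x-y$ gives, with $w:=p+\beta\partial_x p$,
\[
  (p+\beta\partial_x p)*f(x)=\int_0^1 w(z)\,f(x-z)\,dz .
\]
Put $g(z):=u(x-z)$; as $u\in H^{1}(\T)\subset C(\T)$, the restriction of $g$ to $(0,1)$ lies in $H^{1}(0,1)$, and $g_z(z)=-u_x(x-z)$, so $f(x-z)=\frac b2 g(z)^2+\frac{3-b}2 g_z(z)^2$, while $1$-periodicity of $u$ forces $g(0)=g(1)=u(x)$. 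Hence
\[
  (p+\beta\partial_x p)*f(x)=\int_0^1 w(z)\Bigl(\tfrac b2 g^2+\tfrac{3-b}2 g_z^2\Bigr)(z)\,dz=:I(x),
\]
and the task reduces to showing $I(x)\ge J(b,\beta)\,u(x)^2$ for every $x\in\T$.

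Suppose first $u(x)\neq0$. Then $v:=g/u(x)$ belongs to $H^{1}(0,1)$ with $v(0)=v(1)=1$, so $v$ is admissible in the infimum defining $J(b,\beta)$; therefore
\[
  I(x)=u(x)^2\int_0^1 w\Bigl(\tfrac b2 v^2+\tfrac{3-b}2 v_z^2\Bigr)\,dz\ \ge\ u(x)^2\,J(b,\beta),
\]
which is the assertion at $x$ (and trivially so when $J(b,\beta)=-\infty$).

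It remains to treat the points where $u(x)=0$; there $g\in H^{1}_0(0,1)$ and we must prove $I(x)\ge0$. If $J(b,\beta)=-\infty$ there is nothing to prove, so assume $J(b,\beta)>-\infty$. For every $\lambda\in\R$ the function $1+\lambda g$ is admissible in the definition of $J(b,\beta)$, and expanding the square, using $\int_0^1 w=1$ and $g(0)=g(1)=0$, one obtains
\[
  \lambda^{2}I(x)+b\lambda\!\int_0^1\!w\,g\,dz+\tfrac b2\ \ge\ J(b,\beta)\qquad\text{for all }\lambda\in\R .
\]
A polynomial in $\lambda$ of degree at most two that is bounded below on all of $\R$ must have nonnegative leading coefficient (the coefficient of $\lambda^2$), whence $I(x)\ge0=J(b,\beta)\,u(x)^2$, as required. (Alternatively, when $J(b,\beta)$ is finite one may just invoke the coercivity of the bilinear form $B$ on $E_{\beta,0}\supseteq H^{1}_0(0,1)$ established in the proof of the preceding proposition, which gives $I(x)=B(g,g)\ge0$.) The only genuinely delicate point is this degenerate case $u(x)=0$; the change of variables and the identification of the periodic boundary values $g(0)=g(1)=u(x)$ with the endpoint constraint in the definition of $J(b,\beta)$ are routine once carefully set up.
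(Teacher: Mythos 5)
Your proof is correct and follows essentially the same route as the paper's: both reduce the pointwise convolution bound to the variational definition of $J(b,\beta)$ by a translation/change of variables and a normalization of the boundary value to $1$. The only (minor) differences are that you keep the weight as $p+\beta\partial_x p$ by reflecting the test function instead of the kernel, so you never need the identity $(p+\beta\partial_x p)(1-x)=(p-\beta\partial_x p)(x)$ nor the evenness $J(b,\beta)=J(b,-\beta)$ that the paper invokes, and that you explicitly treat the degenerate case $u(x)=0$ (via the quadratic-in-$\lambda$ argument with the competitors $1+\lambda g$), a point the paper passes over silently when it "normalizes to $u(1)=1$".
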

\begin{proof}
Let $\alpha=\alpha(b,\beta)$ be some  constant. Because of  the invariance under translation, we get that the inequality
\begin{equation}
\label{prop1}
 (p + \beta \partial_x p) * \left(\frac{b}{2} u^{2} +\left(\frac{3-b}{2} \right) u_{x}^2) \right)(x) \geq \alpha \ u^{2}(x),
\end{equation} 
holds true for all $u\in H^{1}(\T)$ and all $x\in \T$ if and only if the inequality
\begin{equation}
  (p + \beta \partial_x p) * \left(\frac{b}{2} u^{2} +\left(\frac{3-b}{2} \right) u_{x}^2) \right)(1) \geq \alpha \ u^{2}(1),
\end{equation} 
holds true for all $u\in H^{1}(\T)$. But on the interval $]0,1[$, $(p + \beta \partial_x p) (1-x)=(p - \beta \partial_x p)(x)$. Then we get
\begin{equation}
  (p + \beta \partial_x p) * \left(\frac{b}{2} u^{2} +\left(\frac{3-b}{2} \right) u_{x}^2) \right)(1)  =\displaystyle\int_{0}^{1} (p - \beta \partial_x p)  \left(\frac{b}{2} u^{2} +\left(\frac{3-b}{2} \right) u_{x}^2 \right)(x) \, dx.
\end{equation}
Normalizing to obtain $u(1)=1$, we get that the best constant  $\alpha$ in inequality (\ref{prop1}) satisfies $\alpha=J(b,-\beta)=J(b,\beta)$.
\end{proof}
The next proposition provides a first lower bound estimate of  $J(b,\beta)$, when $b \in [-1,3]$.
\begin{proposition}
\label{propi}
Let  $-1\leq b \leq 3$ and $\left | \beta \right |\leq \frac{e+1}{e-1}$. Then, if $u\in H^{1}(0,1)$ such that $u(1)=u(0)$, we get  
\begin{eqnarray*}
 \label{propj}
(p \pm \beta \partial_x p) * \left(\frac{b}{2} u^{2} +\left(\frac{3-b}{2} \right) u_{x}^2 \right) \geq \ \begin{cases}
 \delta_{b} \  u^{2} ,   \ \mbox{if}    \  \   \left | \beta  \right | \leq 1  \\
\\
\frac{\delta_b}{2}[(e+1) - \left |\beta \right | (e-1)] u^2, \   \mbox{if} \    \ 1\leq \left | \beta  \right| \leq \frac{e+1}{e-1}, 
\end{cases}
\end{eqnarray*} 
where 
\begin{equation}
\label{del}
\delta_b = \frac{\sqrt{3-b}}{4}\left(\sqrt{3(1+b)}-\sqrt{3-b} \right).
\end{equation}
\end{proposition}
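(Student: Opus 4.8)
The plan is to deduce the two stated pointwise inequalities from lower bounds on the constant $J(b,\beta)$. By Proposition~\ref{propb}, for every $u\in H^{1}(0,1)$ with $u(0)=u(1)$ one has $(p\pm\beta\partial_x p)*\bigl(\tfrac b2 u^{2}+(\tfrac{3-b}{2})u_x^{2}\bigr)\ge J(b,\pm\beta)\,u^{2}$, and by Lemma~\ref{c2} $J(b,\pm\beta)=J(b,\beta)$; so it suffices to prove $J(b,\beta)\ge\delta_b$ when $|\beta|\le 1$ and $J(b,\beta)\ge\tfrac{\delta_b}{2}\bigl[(e+1)-|\beta|(e-1)\bigr]$ when $1\le|\beta|\le\tfrac{e+1}{e-1}$, and by evenness we may take $\beta\ge 0$ throughout.

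First I would treat $0\le\beta\le 1$ by a pointwise, completion-of-square argument. Set $w=p+\beta\partial_x p$; on $(0,1)$ we have $w\ge 0$, $w''=w$ (since $p''=p$ off $\Z$), $\int_0^1 w=1$ and $w'(1^-)-w'(0^+)=1$, and, crucially, $w^{2}-(w')^{2}=(1-\beta^{2})\bigl(p^{2}-(\partial_x p)^{2}\bigr)=\frac{1-\beta^{2}}{4\sinh^{2}(1/2)}$ is \emph{constant}. Given $u\in H^{1}(0,1)$ with $u(0)=u(1)=1$, put $\phi:=\delta_b\,w'\,u^{2}\in W^{1,1}(0,1)$, so that $\phi'=\delta_b(w u^{2}+2w'uu_x)$. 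Using $w''=w$, the defining relation $\delta_b^{\,2}=(\tfrac b2-\delta_b)\tfrac{3-b}{2}$ underlying \eqref{del}, and the constancy of $w^{2}-(w')^{2}$, one verifies the algebraic identity
\[
w\Bigl(\tfrac b2 u^{2}+\tfrac{3-b}{2}u_x^{2}\Bigr)
=\phi'+\tfrac{3-b}{2}\,w\Bigl(u_x-\tfrac{2\delta_b w'}{(3-b)w}u\Bigr)^{2}
+\frac{\delta_b^{\,2}(1-\beta^{2})}{2(3-b)\sinh^{2}(1/2)}\cdot\frac{u^{2}}{w}.
\]
For $0\le\beta\le 1$ and $b<3$ every term to the right of $\phi'$ is nonnegative, so integrating over $(0,1)$ and using $\int_0^1\phi'=\delta_b\bigl(w'(1)-w'(0)\bigr)=\delta_b$ gives $\int_0^1 w\bigl(\tfrac b2 u^{2}+\tfrac{3-b}{2}u_x^{2}\bigr)\ge\delta_b$; taking the infimum over such $u$ yields $J(b,\beta)\ge\delta_b$. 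The borderline $b=3$ (where $\delta_b=0$) is immediate, since the left-hand side is then manifestly nonnegative.

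For $1\le\beta\le\tfrac{e+1}{e-1}$ the quotient $w'/w$ is unbounded near $x=0$, so the pointwise scheme no longer closes; here I would interpolate. Since $\beta\mapsto J(b,\beta)$ is concave (Lemma~\ref{c2}), writing $\beta=(1-\lambda)\cdot 1+\lambda\cdot\tfrac{e+1}{e-1}$ with $\lambda=\dfrac{\beta-1}{\frac{e+1}{e-1}-1}\in[0,1]$ gives $J(b,\beta)\ge(1-\lambda)J(b,1)+\lambda J\bigl(b,\tfrac{e+1}{e-1}\bigr)$. By the previous step $J(b,1)\ge\delta_b$, while $J\bigl(b,\tfrac{e+1}{e-1}\bigr)\ge 0$ because there $w=\tfrac{2e}{(e-1)^{2}}\sinh x\ge 0$ on $(0,1)$ by \eqref{omega} and $\tfrac b2 u^{2}+\tfrac{3-b}{2}u_x^{2}\ge 0$ pointwise for $0\le b\le 3$. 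A direct computation gives $1-\lambda=\tfrac12\bigl[(e+1)-\beta(e-1)\bigr]$, whence $J(b,\beta)\ge\tfrac{\delta_b}{2}\bigl[(e+1)-\beta(e-1)\bigr]$, as asserted.

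The main obstacle is the displayed identity: one must recognize that the correct multiplier in $\phi=\delta_b w'u^{2}$ is exactly the larger root of $\mu^{2}=(\tfrac b2-\mu)\tfrac{3-b}{2}$ — i.e. the quantity $\delta_b$ of \eqref{del} — and that with this choice the cross terms $w'uu_x$ and the terms in $(w')^{2}/w$ cancel precisely, thanks to $w''=w$ and to $w^{2}-(w')^{2}$ being constant on $(0,1)$. Once this is in place, the nonnegativity for $|\beta|\le 1$, the boundary evaluation $w'(1)-w'(0)=1$, and the soft concavity argument for $|\beta|\ge 1$ are all routine.
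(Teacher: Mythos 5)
Your proof is correct, but it follows a genuinely different route from the paper's. For $|\beta|\le 1$ the paper splits the kernel as $p=p_1+p_2$ with $p_1,p_2$ the two exponential branches, applies the elementary inequality $a^2u^2+u_x^2\ge \mp 2auu_x$ together with an integration by parts to each branch separately (which is where the quadratic relation $a^2+a=\tfrac{b}{3-b}$, hence $\delta_b$, appears), and then recombines via $p+\beta\partial_xp=(1+\beta)p_1+(1-\beta)p_2$. Your single completed-square identity on $w=p+\beta\partial_xp$, exploiting $w''=w$ and the constancy of $w^2-(w')^2$, is the ``recombined'' version of the same mechanism: I checked the algebra, and the cross terms and the $(w')^2/w$ terms do cancel exactly thanks to $\delta_b^2=\bigl(\tfrac b2-\delta_b\bigr)\tfrac{3-b}{2}$, which indeed holds for the $\delta_b$ of \eqref{del}. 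For $1\le|\beta|\le\tfrac{e+1}{e-1}$ the two arguments diverge more substantially: the paper uses the pointwise kernel domination $p_2\le e\,p_1$ to get $p+\beta\partial_xp\ge[(e+1)-\beta(e-1)]p_1$ and then reuses the $p_1$-estimate, whereas you interpolate using the concavity of $\beta\mapsto J(b,\beta)$ from Lemma~\ref{c2} between the endpoints $\beta=1$ and $\beta=\tfrac{e+1}{e-1}$; your computation $1-\lambda=\tfrac12[(e+1)-\beta(e-1)]$ is right. The paper's version is self-contained and yields the convolution inequality directly, while yours leans on Lemma~\ref{c2} (whose proof is outsourced to~\cite{Bra1}) and on Proposition~\ref{propb}, but in exchange it makes transparent why $\delta_b$ must be the root of that quadratic. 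One small caveat: your endpoint bound $J\bigl(b,\tfrac{e+1}{e-1}\bigr)\ge0$ uses pointwise nonnegativity of $\tfrac b2u^2+\tfrac{3-b}{2}u_x^2$ and therefore only covers $0\le b\le3$, not the full stated range $-1\le b\le3$ for the second case; this is not a real defect, since the paper's own kernel-comparison step has the same implicit restriction and the proposition is only invoked for $b\in(1,3]$, but it would be worth flagging.
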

\begin{remark}
Notice that $\delta_b\ge0$ if and only if  for $0\leq b \leq 3$.
\end{remark}
\begin{proof}
It is sufficient to consider the case $0 \leq \beta \leq \frac{e+1}{e-1}$.  We make the convolution estimates  for $(p+\beta \partial_x p)$, the convolution estimates for $(p-\beta \partial_x p)$ being similar. First observe that:
\begin{equation}
\forall x \in \R  \ \  p(x)= \frac{e^{x-\frac{1}{2}-[x]}}{4 \sinh \frac{1}{2}} + \frac{e^{-x+\frac{1}{2}+[x]}}{4 \sinh \frac{1}{2}}  = p_1(x) + p_2(x).  
\end{equation} 
 We start with the estimate of $p_1*(a^2 u^2 +u_x^{2})(1)$, with $a\in \R$ to be determined later. We get
\begin{eqnarray*}
p_1 * ( a^{2}u^{2}+u^{2}_x)(1)&=& \frac{1}{4\sinh(\frac{1}{2})}\displaystyle\int_{0}^{1} e^{\frac{1}{2}-\xi} (a^{2}u^{2}+u_{x}^{2})(\xi) \,d\xi   	\\
&\geq& \frac{-a}{4\sinh(\frac{1}{2})}\displaystyle\int_{0}^{1} e^{\frac{1}{2}-\xi} (2u u_x)(\xi)  \, d\xi \\
&=&   \frac{-a}{4\sinh(\frac{1}{2})} (e^{\frac{-1}{2}}-e^{\frac{1}{2}}) u^{2}(1) - \frac{1}{4\sinh(\frac{1}{2})}\displaystyle\int_{0}^{1} e^{\frac{1}{2}-\xi} a u^{2} \,d\xi  \\
&=& \frac{a}{2} u^{2}(1) - p_1 *(au^{2})(1).
\end{eqnarray*}
Hence 
\begin{equation*}
p_1*((a^{2}+a)u^{2}+u_{x}^{2}) (1) \geq \frac{a}{2}u^{2}(1), 
\end{equation*}
and because of the invariance under translations, we get 
\begin{equation}
\label{p1}
p_1*((a^{2}+a)u^{2}+u_{x}^{2}) \geq \frac{a}{2}u^{2}. 
\end{equation}
Similarily:
\begin{eqnarray*}
\label{p2}
p_2 * ( a^{2}u^{2}+u^{2}_x)(1)&=& \frac{1}{4\sinh(\frac{1}{2})}\displaystyle\int_{0}^{1} e^{\xi-\frac{1}{2}} (a^{2}u^{2}+u_{x}^{2})(\xi) \,d\xi   	\\
&\geq& \frac{a}{4\sinh(\frac{1}{2})}\displaystyle\int_{0}^{1} e^{\xi -\frac{1}{2}} (2u u_x)(\xi)  \, d\xi \\
&=&   \frac{a}{4\sinh(\frac{1}{2})} (e^{\frac{1}{2}}-e^{\frac{-1}{2}}) u^{2}(1) - \frac{1}{4\sinh(\frac{1}{2})}\displaystyle\int_{0}^{1} e^{\xi - \frac{1}{2}} a u^{2} \,d\xi  \\
&=& \frac{a}{2} u^{2}(1) - p_2*(au^{2})(1).
\end{eqnarray*}
Hence, again using the invariance under translations, we get
\begin{equation}
\label{p1}
p_2*((a^{2}+a)u^{2}+u_{x}^{2}) \geq \frac{a}{2}u^{2}. 
\end{equation}
Choose $a$ such that $a^{2} + a = \frac{b}{3-b}$. This is indeed possible if 
$ -1\leq b <3$ (if  $b=3$, the proposition is trivial and there is nothing to prove). We get: 
\begin{eqnarray}
\label{p2}
p_1  * \left(\frac{b}{2} u^{2} +\left(\frac{3-b}{2} \right) u_{x}^2 \right)&\geq& \frac{\delta_b}{2}u^{2}, \\
p_2  * \left(\frac{b}{2} u^{2} +\left(\frac{3-b}{2} \right) u_{x}^2 \right)&\geq& \frac{\delta_b}{2}u^{2}.
\end{eqnarray}
Now, from the identity $p= p_1 + p_2$ and $\partial_x p = p_1 - p_2$, that holds both in the distributional and in the a.e. 
pointwise sense,  we get
\begin{equation}
\label{p3}
p+ \beta \partial_x p = (1+\beta) p_1 + (1-\beta) p_2.
 \end{equation}
 If  $0 \leq \beta \leq 1$, then from (\ref{p2}) and (\ref{p3}), we deduce
 \begin{equation}
 \label{p4}
(p+ \beta \partial_x p)* \left(\frac{b}{2} u^{2} + \left(\frac{3-b}{2} u_{x}^2\right) \right) \geq [(1+\beta)+(1-\beta)] \frac{\delta_b}{2} u^{2}= \delta_b u^{2}. 
 \end{equation}
 We proved as follows. From
\begin{equation}
p_2(x) \leq e \ p_1(x), \quad\forall x\in(0,1),
\end{equation}
we get, for $1\leq \beta \leq  \frac{e+1}{e-1}$ 
\begin{equation}
\begin{split}
p+ \beta \partial_{x}p &= (1+\beta) p_1 -(\beta -1) p_2, \\
&\geq [(e+1)-\beta(e-1)] p_1.
\end{split}
\end{equation}
We deduce, using (\ref{p2}):
\begin{eqnarray}
\forall  \  1 \leq \beta \leq \textstyle\frac{e+1}{e-1}, \quad
(p+\beta\partial_xp) \left( \frac{b}{2} u^{2}+ \left(\frac{3-b}{2} u_{x}^2\right) \right) \geq [(e+1)-\beta(e-1)]
 \frac{\delta_b}{2} u^{2}.
 \end{eqnarray}
\end{proof}
\begin{remark}
If  $-1 \leq b \leq 3$,  then it follows by the preceding proposition that
$\left | \beta \right |\leq 1$, then  $J(b,\beta)\geq \delta_b$, and if $1 \leq \left | \beta \right | \leq \frac{e+1}{e-1}$ then $J(b,\beta)\geq\frac{\delta_b}{2}[(e+1) - \left |\beta \right | (e-1)]$. 
\end{remark}
 \begin{proof}[Proof of Theorem~\ref{teo1}]
By the well-posedness result in $H^s(\T)$, with $s>3/2$, the density of $H^3(\T)$ in $H^s(\T)$ and a simple approximation
argument, we only  need to prove Theorem~\ref{teo1} assuming  $u_0\in H^3(\T)$. 
We thus obtain a unique  solution of (\ref{beq3}), defined in some nontrivial interval $[0,T[$, and such that $u\in C([0,T[,H^{3}(\T)) \cap C^{1}([0,T[,H^{2}(\T))$. The starting point is the analysis of the flow map $q(t,x)$  of (\ref{beq3})
\begin{equation}
 \label{flowb}
 \begin{cases}
 q_t(t,x)=u(t,q(t,x)) &x\in\T,\quad t \in [0,T^{*}),\\
   q(0,x)=x, & x\in\T .
 \end{cases}
\end{equation}
As $u\in  C^{1}([0,T[,H^{2}(\T))$, we can see that $u$ and $u_x$ are continuous on $[0,T[ \times \T$ and $x\mapsto u(t,x) $ is Lipschitz, uniformly with respect to $t$ in any compact time interval in $[0,T[$. Then the flow map $q(t,x)$ is well defined by (\ref{flowb})  in the time interval $[0,T[$ and $q\in C^{1}([0,T[ \times \R, \R)$.
Differentiating  (\ref{beq3}) with respect to the $x$ variable and applying the identity  $\partial_{x}^{2}p * f = p*f  - f$, we get:
\begin{eqnarray*}
u_{t x} + u u_{xx}&=&\frac{b}{2}u^{2}  - \left(\frac{b-1}{2}\right) u_{x}^{2} -  p*\Bigl[ \frac{b}{2} u^2 + \left(\frac{3-b}{2}\right) u_{x}^{2}\Bigr] .
\end{eqnarray*}
Let us introduce the two $C^{1}$ functions of the time variable depending on $\beta$. The constant $\beta$, will be chosen later on
\begin{eqnarray*}
f(t)= \left(-u_x + \beta u\right) (t,q(t,x_0))  \ \ \ \mbox{and} \ \ \ g(t)= -\left(u_x + \beta u\right)(t,q(t,x_0)).
\end{eqnarray*}
Using \eqref{flowb} and differentiating with respect  to $t$, we get
\begin{eqnarray*}
\frac{d f}{dt}(t)&=& [(-u_{tx}-uu_{xx})+ \beta(u_t+u u_x)](t,q(t,x_0)) \\ 
&=&  -\frac{b}{2} u^{2}+ \left(\frac{b-1}{2} \right)u_{x}^{2} + (p-\beta \partial_xp) *\Bigl[ \frac{b}{2} u^2 + \left(\frac{3-b}{2}\right) u_{x}^{2}\Bigr] (t,q(t,x_0)),
\end{eqnarray*}
and 
\begin{eqnarray*}
\frac{d g}{dt}(t)&=& [(-u_{tx}-uu_{xx})- \beta(u_t+u u_x)](t,q(t,x_0)) \\ 
&=&  -\frac{b}{2} u^{2}+ \left(\frac{b-1}{2} \right)u_{x}^{2} + (p + \beta \partial_xp) *\Bigl[ \frac{b}{2} u^2 + \left(\frac{3-b}{2}\right) u_{x}^{2}\Bigr] (t,q(t,x_0)).
\end{eqnarray*}  
Let us first consider $b\in ]1,3]$.
Recall that we work under the condition $\beta_b <\infty$.
By the definition of $\beta_b$ (\ref{beta}) we deduce that there exists $\beta \geq 0$ such that
\begin{eqnarray}
\label{pr}
\beta^{2} \geq  \frac{2}{ b-1}  \left(\frac{b}{2}-J(b,\beta)\right).
\end{eqnarray}
Applying the convolution estimate of (\ref{propb}) and the fact that $J(b,\beta)=J(b,-\beta)$, we get
\begin{eqnarray*}
\frac{d f}{dt}(t)&\geq&  \left( \frac{b-1}{2}\right) u_{x}^{2} +\left( J(b,-\beta)-\frac{b}{2}\right) u^{2}  (t,q(t,x_0)) \\
&\geq&    \frac{b-1}{2}  \ (u_{x}^{2} -\beta^{2} u^{2})  \  (t,q(t,x_0)) \\
&=& \frac{b-1}{2} [f(t)g(t)]
\end{eqnarray*}
In the same way, 
 \begin{eqnarray*}
\frac{d g}{dt}(t)&\geq&  \left( \frac{b-1}{2}\right) u_{x}^{2} +\left( J(b,\beta)-\frac{b}{2}\right) u^{2}  (t,q(t,x_0)) \\
&\geq&    \frac{b-1}{2}  \  (u_{x}^{2} -\beta^{2} u^{2}) \   (t,q(t,x_0)) \\
&=& \frac{b-1}{2} [f(t)g(t)].
\end{eqnarray*}
The assumption $u'_{0}(x_0) < -\beta_b\left | u_0(x_0) \right |$ guarantees that we may choose $\beta$ satisfying (\ref{pr}) with $\beta-\beta_b >0$  small enough so that 
\begin{displaymath}
 u'_{0}(x_0) < -\beta\left | u_0(x_0) \right |.
\end{displaymath}
For such a choice of $\beta$ we have $f(0)>0  \ $ and $g(0) >0$.

We now make use of the following result:
\begin{lemma}[See \cite{Bra1}]
Let $0<T^{*}  \leq \infty$ and  $f,g \in C^{1}([0,T^{*}[,\R)$ be such that, for some constant $c>0$ and all $t \in [0, T^{*}[$,
\  \begin{eqnarray*}
\frac{d f}{dt}(t)&\geq&  c f(t) g(t) \\
 \frac{d g}{dt}(t) &\geq&   c f(t) g(t).
\end{eqnarray*}
If $f(0)>0$ and $g(0) >0$, then 
\begin{displaymath}
T^{*} \leq \frac{1}{c\sqrt{f(0)g(0)}}.
\end{displaymath}
\end{lemma}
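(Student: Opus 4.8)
The plan is to argue in two stages: first I would show that $f$ and $g$ stay strictly positive on the whole interval $[0,T^*[$, and then I would collapse the coupled system into a single scalar differential inequality for the product $fg$, which can be integrated explicitly to yield the bound.

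For the positivity stage, note that $f'(0)\ge c f(0)g(0)>0$ and $g'(0)\ge c f(0)g(0)>0$, so both functions start out strictly increasing. Let $t_1$ be the infimum of the set of times $t\in\,]0,T^*[$ at which $f(t)=0$ or $g(t)=0$, with the convention $t_1=T^*$ if this set is empty. On $[0,t_1[$ both $f$ and $g$ are positive (this follows from $f(0),g(0)>0$ and the intermediate value theorem), hence $f'\ge cfg>0$ and $g'\ge cfg>0$ there, so $f$ and $g$ are strictly increasing on $[0,t_1[$ and in particular bounded below by $f(0)>0$ and $g(0)>0$ respectively. If we had $t_1<T^*$, continuity would give $f(t_1)\ge f(0)>0$ and $g(t_1)\ge g(0)>0$, contradicting the choice of $t_1$; hence $t_1=T^*$, so $f,g>0$ (and nondecreasing) throughout $[0,T^*[$.

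For the second stage, set $P=fg$, a $C^1$ and strictly positive function on $[0,T^*[$. Differentiating and using the two hypotheses together with $f,g>0$ gives $P'=f'g+fg'\ge cfg(f+g)=cP(f+g)$, and the arithmetic--geometric mean inequality $f+g\ge 2\sqrt{fg}=2\sqrt{P}$ then yields $P'\ge 2cP^{3/2}$ on $[0,T^*[$. Consequently $Q:=P^{-1/2}$ is $C^1$ and positive on $[0,T^*[$ and satisfies $Q'=-\tfrac12 P^{-3/2}P'\le -c$, so integration from $0$ to $t$ gives $Q(t)\le Q(0)-ct=\frac{1}{\sqrt{f(0)g(0)}}-ct$ for every $t\in[0,T^*[$. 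Since the left-hand side is positive, this forces $\frac{1}{\sqrt{f(0)g(0)}}-ct>0$ for all $t<T^*$, which is exactly the claimed estimate $T^*\le\frac{1}{c\sqrt{f(0)g(0)}}$.

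I expect the only delicate point to be the positivity argument, which has to be carried out via the continuity/maximality bootstrap above rather than by a one-line estimate; once positivity is secured, Steps 2 and 3 are routine manipulations of a scalar ODE inequality. It is worth remarking that working with $\min(f,g)$ in place of the product $fg$ would only produce the weaker conclusion $T^*\le 1/(c\min(f(0),g(0)))$, so passing to the product and invoking AM--GM is precisely what delivers the sharp constant $\sqrt{f(0)g(0)}$.
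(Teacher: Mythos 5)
Your proof is correct, and it follows essentially the same route as the argument in the cited reference \cite{Bra1} (which the paper invokes without reproducing): establish positivity of $f$ and $g$ by a continuity bootstrap, then reduce to the scalar Riccati-type inequality for $\sqrt{fg}$ via AM--GM (your substitution $Q=(fg)^{-1/2}$ with $Q'\le -c$ is just the integrated form of $h'\ge ch^2$ for $h=\sqrt{fg}$). No gaps; the bound and the constant come out exactly as stated.
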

The blow-up of $u$ then follows immediately from our previous estimates  applying the above lemma.
\end{proof}
\section{estimates of $\beta_b$}

Theorem \ref{teo1} is meaningful only if $b\in (1,3]$ is such that  $\beta_b < \infty$.
We recall here that~$\beta_b$ is defined by Eq.~\eqref{beta}:
\begin{equation*}
\beta_b=\inf \left\{\beta >0:  \ \  \beta^{2}+\frac{2}{\left | b-1 \right |} \left(J(b,\beta)-\frac{b}{2}\right)  \geq  0\right\} .
\end{equation*}

Next, we propose three lower bound estimates
for the convolution term
\[(p \pm \beta \partial_x p) * \left(\frac{b}{2} u^{2} +\left(\frac{3-b}{2} \right) u_{x}^2 \right),\]
or ---what is equivalent, owing to~Proposition~\ref{propb}--- three lower bound estimates for $J(b,\beta)$).
Such estimates will allow us to determinate sufficient conditions on~$b\in(1,3]$ in order to $\beta_b$ to be finite
and will provide upper bounds for~$\beta_b$.

Estimate~1 and Estimate~2 below are presented mainly for pedagogical purposes, as they are self-contained. But 
these two estimates will be later on improved by Estimate~3, which is more technical and deeply relies on a few  involved computations made in~\cite{Bra1}.   
We point out however that Estimate~1 suffices to claim that~Theorem~\ref{teo1} is not vacuous. 
\subsection{Estimate 1}
  Let $0 \leq \beta \leq \frac{e+1}{e-1}$ and  $1 < b  \leq 3$. We start considering the obvious estimate
\begin{equation*}
(p \pm \beta \partial_x p) * \left(\frac{b}{2} u^{2} +\left(\frac{3-b}{2} \right) u_{x}^2 \right) \geq  0.
\end{equation*}
 Thanks to definition \eqref{beta}, we see that a sufficient condition on~$b$  which entails $\beta_b < \infty$, is the existence of a constant~$\beta$ satisfying
\begin{equation}
\label{es1}
\sqrt{\frac{b}{b-1}} \le
 \beta  \leq \frac{e+1}{e-1}.
\end{equation}
This holds when
$b\ge  \frac{(e+1)^2}{4e}\equiv\alpha$.
In this case, the corresponding bound for $\beta_b$ is
\begin{equation}
\label{ron}
\beta_b \le \sqrt{\frac{b}{b-1}}<+\infty, \qquad \text{for}  \ \ \textstyle\frac{(e+1)^2}{4e}\le b \le3.
\end{equation} 
(See Figure  \ref{cac1}).
\subsection{Estimate 2}
Proposition \ref{propi} provides a better sufficient condition ensuring that $\beta_b <  +\infty$. Namely: 
\begin{eqnarray}
\label{es3}
 \exists  \  0\leq \beta \leq 1 \ \ \ \mbox{such that} \ \ \ \beta^{2}+ \frac{2}{b-1} \left(\delta_b - \frac{b}{2}\right)  \geq 0, 
\end{eqnarray}
or
\begin{eqnarray}
\label{es4}
 \exists  \  1\leq \beta \leq \frac{e+1}{e-1} \ \ \ \mbox{such that} \ \ \ \beta^{2}+ \frac{2}{b-1}\left( [(e+1)-\beta(e-1)] \frac{\delta_b}{2}- \frac{b}{2} \right)  \geq 0,
\end{eqnarray}
where $\delta_b$ is as (\ref{del}).
The study of the function $b \mapsto   \sqrt{\frac{2}{b-1}\left(\frac{b}{2}-\delta_b \right)}$ in the interval $(1,3]$ however reveals that condition~\eqref{es3} is satisfied only for $b=2$. We have $\delta_2= \frac{1}{2}$ and so $\beta=1$.
The corresponding estimate for $\beta_2$ is then $\beta_2\le1$.
This situation corresponds to the Camassa--Holm equation. We thus recover the result in~\cite{Bra13}. 
(See Figure \ref{cac2}.)  
On the other hand, solving~\eqref{es4} is possible if and only if the largest real zero $\phi(b)$ of the quadratic polynomial
$
 \beta\mapsto P_{b}(\beta) = \beta^{2} + \beta  \ \delta_b   \left(\frac{e+1}{b-1}\right) + \left(\delta_b \left(\frac{e+1}{b-1} \right)- \frac{b}{b-1}\right)
 $
is inside the interval $[1,\frac{e+1}{e-1}]$.

A simple computation shows that this is indeed the case when $\alpha\le b\le 3$.
Here $\alpha=\frac{(e+1)^2}{4e}$ is the same as in Estimate~1.
For $\alpha\le b\le 3$, now we get the bound
 \begin{equation}
 \label{vodka}
 \beta_b \leq \phi(b)<+\infty,  
 \end{equation}
 that considerably improves our earlier estimate (\ref{ron}). See Figure~\ref{cac2}
\begin{figure}
\centering
\begin{subfigure}[]{0.6\textwidth}
\includegraphics[width=\textwidth]{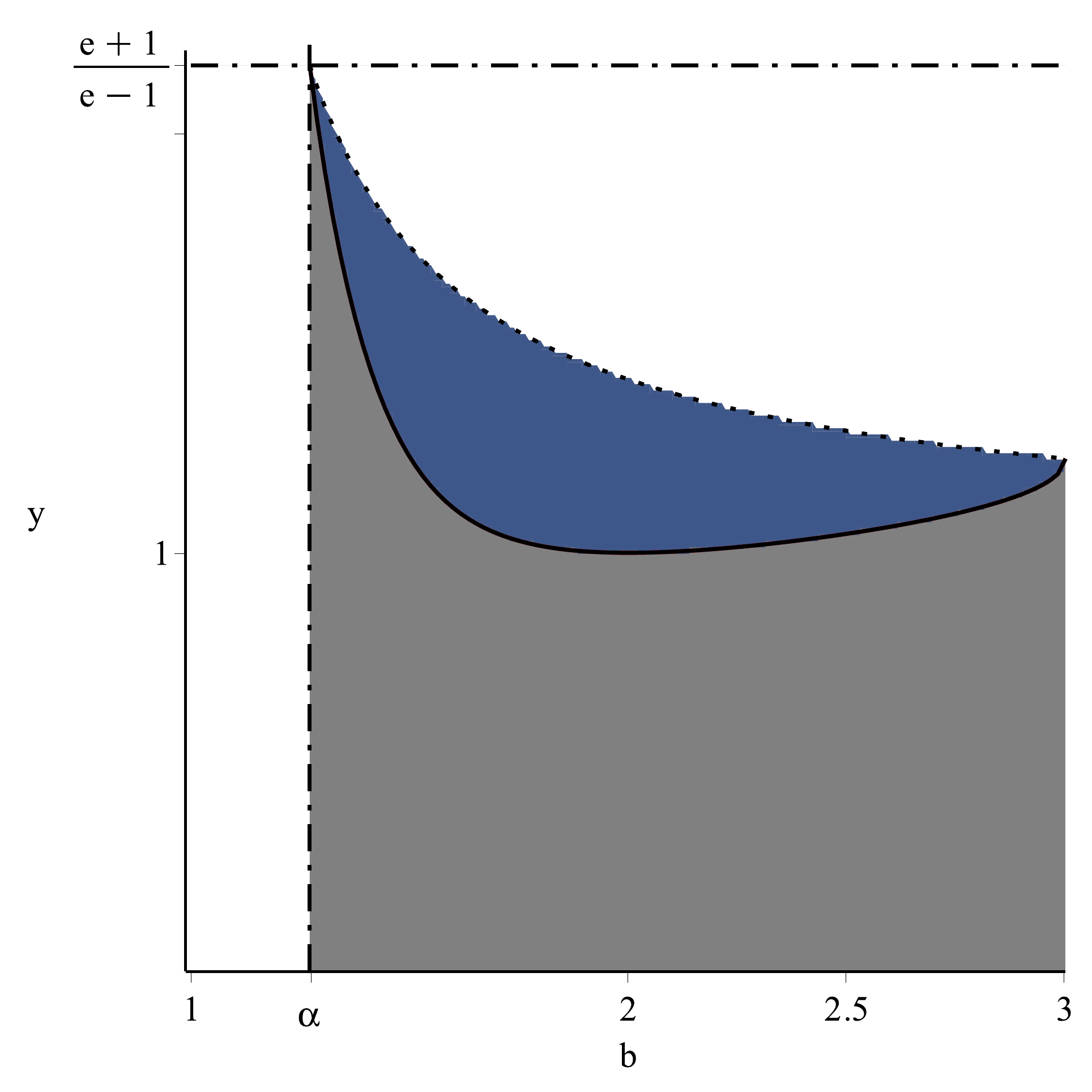}
\caption{ The plot of the function $b \mapsto   \sqrt{\frac{b}{b-1}}$, providing the bound  (\ref{ron}). 
  The upper-bound estimate of $\beta_b$ given by Eq.(\ref{ron}), showing that 
  Theorem~\ref{teo1} applies for $b\in [\alpha,3]$,
  where $ \alpha= \frac{(e+1)^2}{4e}$ (blue and gray region).}
  \label{cac1}
\caption{The function $b \mapsto \phi(b)$, providing the bound  (\ref{vodka}). The   upper-bound estimates of $\beta_b$  given by Eq.(\ref{vodka}) and the Theorem \ref{teo1} are valid inside the interval $[\alpha,3]$ (grey region).}
\label{cac2}
\end{subfigure}  \hspace{3.38pt}
\caption{First and Second estimate of  $\beta_b$.}
\label{figu2}
\end{figure}
\subsection{Estimate 3}

This part relies  on the properties of $J(b,\beta)$ which are described in  Lemma \ref{c2} and the computations made in~\cite{Bra1}

Let $I(\alpha,\beta)$ as in~\cite[Section~2]{Bra1}.
For $b\in (1,3]$, and $ \left | \beta \right| \leq \frac{e+1}{e-1}$, the relation between $I$ and $J$ is the following:
\begin{eqnarray*}
 \label{func}
J(b,\beta)= \begin{cases}
 \frac{3-b}{2}  \ I\left(\frac{b}{3-b}, \beta\right),   &  \mbox{if }   b\neq 3 \\
  \frac{3}{2}  \ \inf\left\{   \displaystyle\int_{0}^{1}  w(x) \  u^{2} \, dx ; \  u\in H^{1}(0,1),  \ u(0)=u(1)= 1 \right\}, &   \mbox{if } b=3. 
\end{cases}
\end{eqnarray*}
where $I(\alpha,\beta)$ is as in \cite{Bra1}. 
If $b\neq 3$, borrowing the computation made in \cite{Bra1}, we get

\begin{eqnarray*}
J\left(b,\frac{e+1}{e-1}\right)&=& \frac{3-b}{2}   \ I  \left(\frac{b}{3-b},\frac{e+1}{e-1} \right) \\
 &=&  \frac{3-b}{4e} \left( e+1\right)^{2}  \frac{P'_{\upsilon(b)}}{P_{\upsilon(b)}} (\cosh 1)
\end{eqnarray*}
where
\begin{eqnarray*}
\upsilon(b)= -\frac{1}{2} +\frac{1}{2} \cdot \sqrt{1+4\cdot \left( \frac{b}{3-b}\right)} \in\{z\in\C: \Im(z) \geq 0 \} .
\end{eqnarray*}
and $P_{\upsilon(b)}$ is Legendre function of the first kind, of the degree $\upsilon(b)$, arising when solving the Euler--Lagrange 
equation associated with the minimization problem of~$I(\alpha,\frac{e+1}{e-1})$.
The reason for considering here the limit case $\beta=\frac{e+1}{e-1}$ is twofold: on one hand, in this case the weight function
has a simpler expression, namely 
$w(x)$ becomes in this case
 \begin{eqnarray*}
 w(x)= p(x) + \textstyle\frac{e+1}{e-1}\,\partial_x p(x)= \textstyle\frac{2 e}{(e-1)^{2}}  \sinh x, \qquad x\in(0,1);
 \end{eqnarray*}
 this allow to reduce the Euler-Lagrange equation to a linear second order ordinary differential equation of Legendre type.
 See~\cite{Bra1} for more details.
On the other hand, by Lemma~\ref{c2}, we have  $J(b,\beta)\ge J\left(b,\frac{e+1} {e-1}\right)$ 
for all $0\le\beta\le \frac{e+1}{e-1}$.

Now, for $0\le\beta\le \frac{e+1}{e-1}$, we have
\begin{equation}
 \label{oio}
 \begin{split}
\beta^2+\frac{2}{b-1}\biggl( J(b,\beta)-\frac b2\biggr)
&\ge \beta^2 +
\frac{2}{b-1}
\left( 
\frac{3-b}{4e} \left( e+1\right)^{2}  \frac{P'_{\upsilon(b)}}{P_{\upsilon(b)}} (\cosh 1) -\frac{b}{2}
\right).
\end{split}
\end{equation}
Computing the Legendre function shows that the right hand-side of the above expression is nonnegative when $\gamma\le b\le3$,
with $\gamma\approx1.012$. See Figure~\ref{fi}.
Therefore, in the range $b\in[\gamma,3]$ we have $\beta_b<+\infty$
\begin{equation}
\label{wisky}
\beta_b \leq  \sqrt{\frac{2}{b-1}
\left(\frac{b}{2} - \frac{3-b}{4e} \left( e+1\right)^{2}  \frac{P'_{\upsilon(b)}}{P_{\upsilon(b)}} (\cosh 1) \right)},
\qquad \text{for $\gamma\le b\le 3$},
\end{equation}
and Theorem~\ref{teo1} applies in such range.
\begin{figure}
\centering
\includegraphics[width=0.58\textwidth]{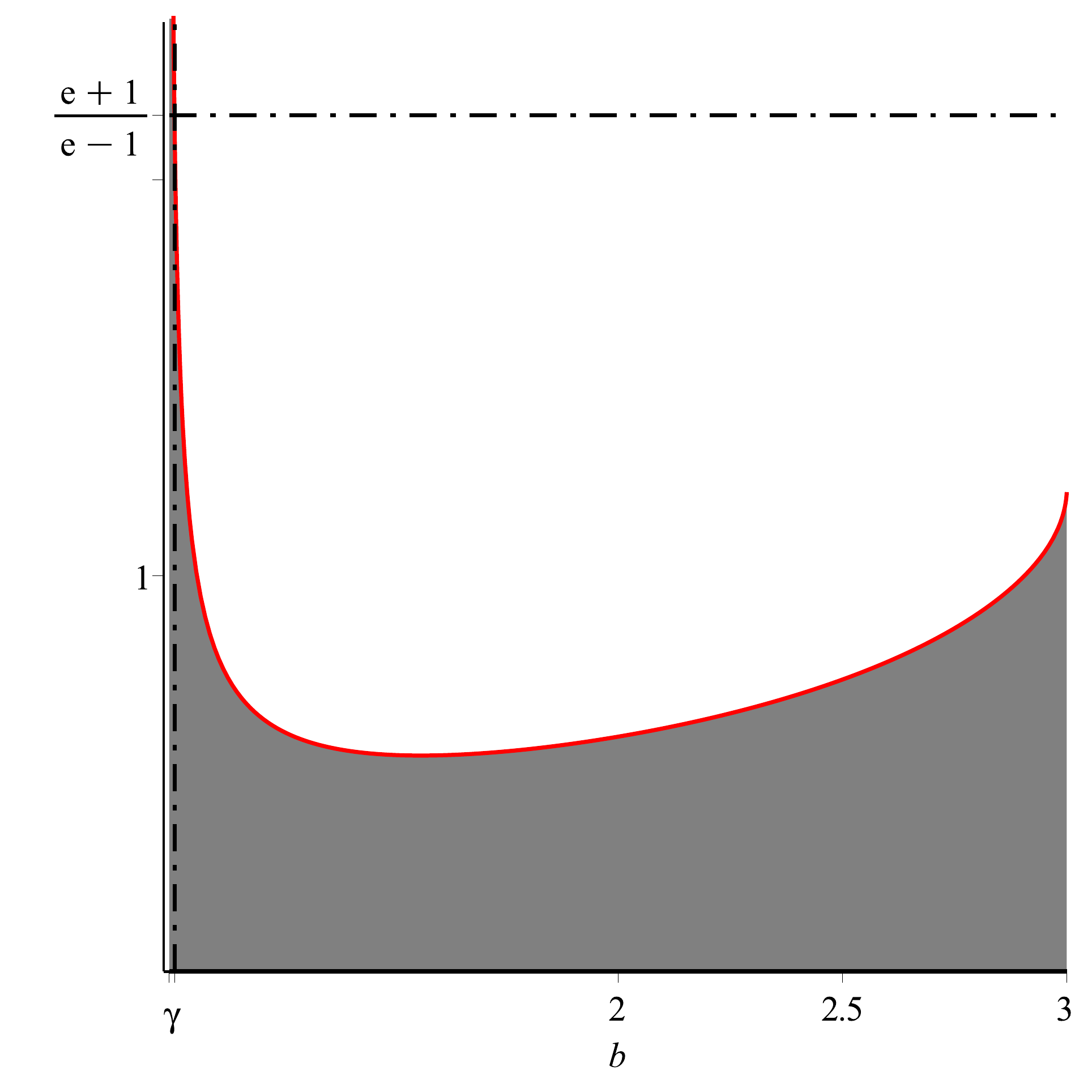}
  \caption{ The function 
  $b\mapsto
   \sqrt{\frac{2}{b-1}
\left(\frac{b}{2} - \frac{3-b}{4e} \left( e+1\right)^{2}  \frac{P'_{\upsilon(b)}}{P_{\upsilon(b)}} (\cosh 1) \right)}$,
  providing the bound  (\ref{wisky}). 
  The  upper-bound estimates of $\beta_{b}$ given by Eq.(\ref{wisky}) and the Theorem \ref{teo1}
  are valid inside the interval $[\gamma,3]$ (grey region)}
 \label{fi}
\end{figure}
\subsection{Numerical Analysis  of $\beta_b$}
In this last part we compute numerically $\beta_b$. We need first to compute numerically $J(\beta,b)$.
Recall that 
\begin{equation*}
\label{j1}
J(b,\beta)= \frac{b}{2}  + \inf \{ T(v) : v\in H_{0}^{1}(0,1) \}, 
\end{equation*}
where 
\begin{equation}
\label{j2}
T(v) = \displaystyle\int_{0}^{1} w(x) \left( \frac{b}{2}(v^{2}+2v)+\left(\frac{3-b}{2}\right)v_{x}^{2}\right) (x) \, dx. 
\end{equation}
The Euler-Lagrange equation associated with  the above minimization problem is
\begin{equation}
\label{EL}
(3-b) w(x) v_{xx} + (3-b) w_{x} v_{x} - b w v - b w=0.
\end{equation}
Let  $\bar{v}$ be the solution such that $\bar{v}(0)=\bar{v}(1)=0$, i.e $\bar{v}$ is the minimiser :
\begin{equation}
\label{EE1}
J(b,\beta)= \frac{b}{2} + \displaystyle\int_{0}^{1} w(x) \left( \frac{b}{2}\bar{v}^{2} + b \bar{v} + \left(\frac{3-b}{2}\right) \bar{v}_{x}^{2} \right)(x) \, dx.
\end{equation}
On the other hand, multiplying  (\ref{EL}) by $\bar{v}$ and  integrating with respect to the spatial variable, we get 
\begin{eqnarray*}
\displaystyle\int_{0}^{1} (3-b) w \bar{v}_{xx} \bar{v} \, dx + \displaystyle\int_{0}^{1} (3-b) w_x \bar{v} \bar{v}_{x} \, dx - \displaystyle\int_{0}^{1} b w (\bar{v}^{2}+\bar{v}) \, dx =0.
\end{eqnarray*}
 Integrating by parts $\displaystyle\int_{0}^{1} (3-b) w_x \bar{v} \bar{v}_{x}\,dx$, and using that $\bar{v}(0)=\bar{v}(1)=0$, we get  
\begin{eqnarray*}
&&\displaystyle\int_{0}^{1} (3-b) w \bar{v}_{x}^{2} + \displaystyle\int_{0}^{1} b (w \bar{v}^{2}+\bar{v}) \, dx =0  \\
&& \displaystyle\int_{0}^{1} b w\bar{v} \, dx  = \displaystyle\int_{0}^{1} w \left( b(\bar{v}^{2}+2\bar{v})+(3-b)\bar{v}_x^{2}\right)\, dx.
\end{eqnarray*}  
Thus, using $\int_{0}^{1}w\, dx=1$ and $(3-b) (w v_{xx} + w_{x} v_{x}) = bw (v + 1)$, we get 
\begin{eqnarray*}
J(b,\beta)&=& \frac{b}{2} + \displaystyle\int_{0}^{1} \frac{b}{2} w \bar{v} \,dx \\
&=& \frac{3-b}{2} \displaystyle\int_{0}^{1}  [w \bar{v}_{x}]_x \,dx \\
&=& \frac{3-b}{2} \left[(w \bar{v}_x)(1^{-})-(w \bar{v}_x)(0^{+}) \right].
\end{eqnarray*}
The above solution $\bar v$ of the minimization problem, depending on the parameters $b$ and $\beta$, cannot be computed
analytically, but it it can be computed numerically with the standard numerical schemes for linear ODEs,  with an arbitrary good precision.
This allow to compute numerically the above function $J(b,\beta)$.
This being done, a simple algorithm allows to compute numerically the quantity $\beta_b$ (with an arbirary good precision).
Such numerical computations illustrate that in fact $\beta_b<+\infty$ for $1.0012\ldots \le b\le3$, which is (slightly !) better
than the range $1.012\le b\le3$ obtained via Estimate~3.
The actual value of $\beta_b$ is actually slightly smaller than its upper bound computed in~\eqref{wisky}.
See Figure~\ref{fbeta} and~\ref{dernif}.
\begin{figure}
\centering
\includegraphics[width=0.58\textwidth]{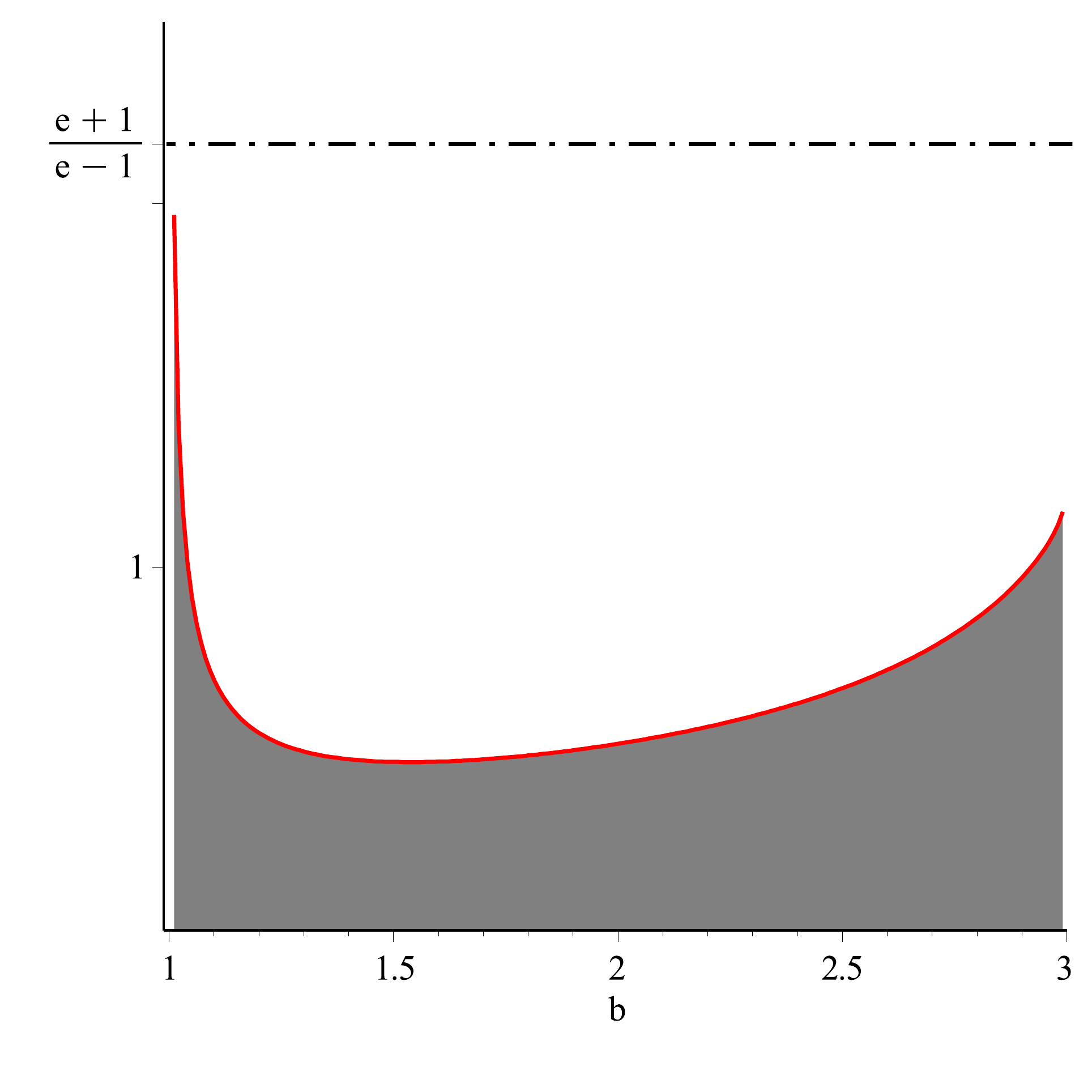}
  \caption{  The plot of  the function $b \mapsto \beta_b $. This numerical approach of $\beta_b$, allows us to say: if  $3\geq b  \geq \alpha_0 \approx 1.0012$, then the Theorem \ref{teo1} is valid (gray region).}
 \label{fbeta}
\end{figure}
We summarize in the last picture all our previous estimates and numerical approximate of $\beta_b$.  
\begin{figure}
\centering
\includegraphics[width=0.58\textwidth]{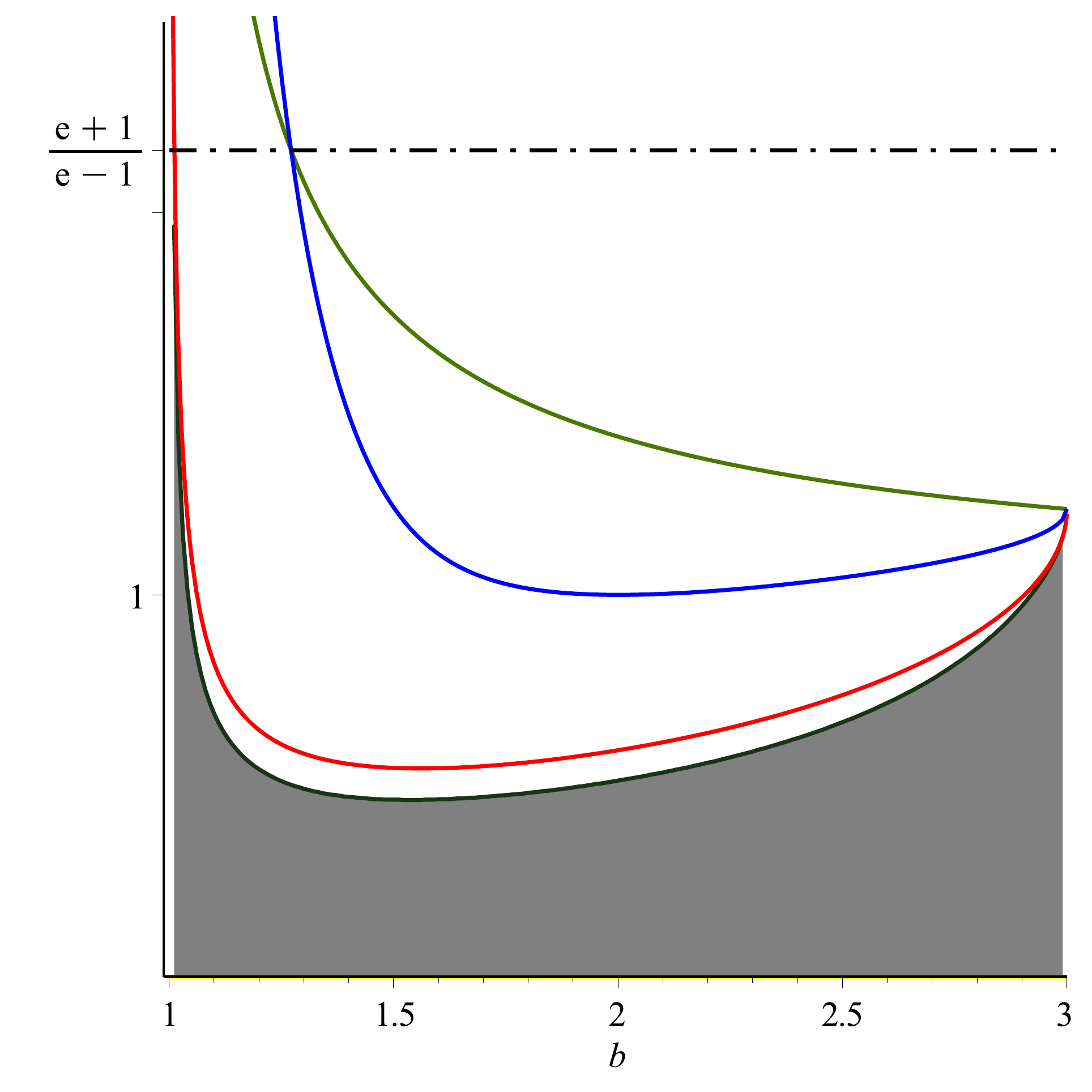}
  \caption{ In this plot we can see the different estimates that we  have worked out (green curve first estimate, blue curve second estimate and red curve third estimates), as well as the  numerical approach of $\beta_b$.} 
\label{dernif}
  \end{figure}
\section*{Acknowledgement}
Also, I would like to thank \textit{Escuela Polit\'ecnica Nacional del Ecuador (EPN)} where most of this paper was written. I appreciate the comfortable and relaxing place it is. \\
This work is supported by  the \textit{Secretar\'ia de Educaci\'on Superior, Ciencia, Tecnolog\'ia e Innovaci\'on del Ecuador  (SENESCYT).} 

\end{document}